\documentclass{amsart}

\usepackage{delimset,amssymb,enumitem,mathtools}
\setlist{itemsep=4pt,topsep=2pt,leftmargin=17pt,listparindent=11pt}
\usepackage{booktabs,longtable,array}
\usepackage{graphicx}
\usepackage{xcolor}
\usepackage[colorlinks=true,allcolors=black]{hyperref}
\usepackage{aliascnt}

\usepackage[capitalize]{cleveref}
\usepackage{subcaption}
\usepackage{tikz}
\usetikzlibrary{positioning,calc,fit,arrows.meta}
\usepackage[numbers,sort&compress,nonamebreak,merge,elide,longnamesfirst]{natbib}
\usepackage{etoolbox}
\usepackage{hyphenat}
\usepackage{microtype}

\makeatletter
\patchcmd{\NAT@citexnum}
{\@citea \NAT@test{\@ne}\NAT@spacechar\NAT@mbox{\NAT@super@kern\NAT@@open}}
{\@citea \nohyphens{\NAT@test{\@ne}}\nobreakspace{}\NAT@mbox{\NAT@super@kern\NAT@@open}}
{}
{\PackageWarning{nonbreaking-citet}{Failed to patch \string\citet}}
\makeatother

\newtheorem{theorem}{Theorem}[section]

\newtheorem{proposition}[theorem]{Proposition}

\theoremstyle{definition}
\newtheorem{definition}[theorem]{Definition}

\theoremstyle{remark}

\numberwithin{equation}{section}
\numberwithin{table}{section}

\crefname{conjecture}{Conjecture}{Conjectures}
\crefname{definition}{Definition}{Definitions}
\crefname{corollary}{Corollary}{Corollaries}
\crefname{remark}{Remark}{Remarks}
\crefname{question}{Question}{Questions}

\allowdisplaybreaks

\tikzset{
  edge/.style={semithick},
  vertex/.style={circle,fill=black,inner sep=1.7pt},
  hnode/.style={circle,draw,inner sep=2.3pt,font=\small},
  boxnode/.style={draw,rounded corners,inner sep=2.5pt,font=\small}
}

\newcommand{\N}{\mathbb N}

\title[Three infinite separating families]{Three Infinite Families Separating Schur Positivity, the Strongly Nice Property, and the Nice Property}

\author[K. Zhang]{K. Zhang}
\address[K. Zhang]{School of Mathematics and Statistics, Beijing Institute of Technology, Beijing 102400, P.\ R.\ China.}
\email{kai@bit.edu.cn}

\keywords{chromatic symmetric function, Schur positivity, strongly nice property, nice property, stable partition, dominance order}
\subjclass[2020]{05E05, 05C15, 05C30}

\begin{document}

\begin{abstract}
For a graph $G$, Schur positivity of $X_G$ implies that $G$ is strongly nice, and every strongly nice graph is nice. We construct three infinite families separating these properties. We first give a connected family $F_t$, $t\ge6$, that is strongly nice but not Schur positive. We then prove that homogeneous strongly nice symmetric functions with nonnegative monomial coefficients are closed under multiplication, and hence that strongly nice graphs are closed under disjoint union. As an application, for $H=K_{3,3}-e$, the graphs
\[
M_t=H\sqcup K_t,\qquad t\ge3,
\]
form a disconnected family that is strongly nice but not Schur positive. Finally, we define
\[
N_r=K_r\vee(K_2\sqcup2K_1),\qquad r\ge2,
\]
and prove that every $N_r$ is connected and nice but not strongly nice. We also introduce the level-$k$ nice property and show that the level depth of $N_r$ is $4r!$.
\end{abstract}

\maketitle

\section{Introduction}\label{sec:intro}
For a finite simple graph $G=(V,E)$, Stanley~\cite{Stanley1995}
defined the chromatic symmetric function
\[
 X_G=\sum_{\kappa}\prod_{v\in V}x_{\kappa(v)},
\]
where the sum ranges over all proper colorings $\kappa:V\to\N$.
The ring of symmetric functions has several classical bases, including
the elementary, monomial, and Schur bases, denoted by
$\{e_\lambda\}$, $\{m_\lambda\}$, and $\{s_\lambda\}$, respectively.
Standard references include Macdonald~\cite{Macdonald1995} and
Stanley~\cite{StanleyEC2}.
A graph is called $e$-positive (respectively, $s$-positive) if all
coefficients of its chromatic symmetric function in the elementary
basis (respectively, Schur basis) are nonnegative. Every $e$-positive
symmetric function is Schur positive; see Stanley~\cite{StanleyEC2}.

Write
\[
X_G=\sum_{\lambda\vdash |V(G)|} a_\lambda(G)m_\lambda .
\]
Stanley~\cite{Stanley1998} defined a graph $G$ to be \emph{nice} if,
for any partitions $\lambda,\mu\vdash |V(G)|$ with
$\lambda\ge\mu$ in dominance order, the positivity of
$a_\lambda(G)$ implies the positivity of $a_\mu(G)$.
Li, Li, Yang, and Zhang~\cite{LiEtAlStrong} defined $G$ to be
\emph{strongly nice} if $a_\lambda(G)\le a_\mu(G)$ whenever
$\lambda,\mu\vdash |V(G)|$ satisfy $\lambda\ge\mu$ in dominance order.
Li, Li, Yang, and Zhang~\cite{LiEtAlStrong} proved that every
$s$-positive graph is strongly nice and every strongly nice graph is
nice. Their proof of the former statement uses the monotonicity of
Kostka numbers established by White~\cite{White1980}.
These known relations are illustrated in Figure~\ref{fig:classes}.

\begin{figure}[htbp]
\centering
\begin{tikzpicture}[scale=0.8]
\draw[line width=1pt] (-0.7,0) ellipse [x radius=5.7, y radius=1.95];
\draw[line width=1pt] (-1.9,0) ellipse [x radius=4.5, y radius=1.45];
\draw[line width=1pt] (-3.9,0) ellipse [x radius=2.5, y radius=1.0];
\draw[line width=1pt] (-4.92,0) ellipse [x radius=1.45, y radius=0.68];
\node at (-4.92,0) {\large $e$-pos};
\node at (-2.5,0) {\large $s$-pos};
\node at (0.3,0) {\large strongly nice};
\node at (3.5,0) {\large nice};
\end{tikzpicture}
\caption{The inclusion relations among the four graph classes.}
\label{fig:classes}
\end{figure}

Several families of Schur positive chromatic symmetric functions are known. Gasharov~\cite{Gasharov1996} proved that the incomparability graph of every $(3+1)$-free poset is Schur positive. Hikita~\cite{Hikita2025} proved the Stanley--Stembridge conjecture, which gives the stronger $e$-positivity for this class. Shelburne and van Willigenburg~\cite{ShelburneVanW2025} proved Schur positivity for generalized net graphs and later characterized the Schur positive complete multipartite graphs~\cite{ShelburneVanW2026}.

Wang and Wang~\cite{WangWang2020} gave a combinatorial formula for every Schur coefficient of a chromatic symmetric function. Let $\mathcal T_\lambda$ denote the set of special rim-hook tabloids of shape $\lambda$. For $T\in\mathcal T_\lambda$, let $\lambda(T)$ be the partition formed by the ribbon sizes of $T$, and let $\operatorname{ht}(T)$ be the sum of the ribbon heights.

\begin{theorem}[Wang--Wang \cite{WangWang2020}]\label{thm:intro-WW}
For every graph $G$ and every partition $\lambda$,
\[
 [s_\lambda]X_G
 =\sum_{T\in\mathcal T_\lambda}
 (-1)^{\operatorname{ht}(T)}\,a_{\lambda(T)}(G).
\]
\end{theorem}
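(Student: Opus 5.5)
The statement is purely a consequence of expanding $X_G$ in the monomial basis, so I plan to reduce it to a change-of-basis identity between $\{m_\mu\}$ and $\{s_\lambda\}$ that does not involve $G$ at all. By definition $X_G=\sum_{\mu} a_\mu(G)\,m_\mu$. Taking the coefficient of $s_\lambda$ is linear, so
\[
[s_\lambda]X_G=\sum_{\mu\vdash |V(G)|} a_\mu(G)\,[s_\lambda]m_\mu .
\]
It therefore suffices to prove, for all partitions $\lambda,\mu$ of the same size, the identity
\[
[s_\lambda]m_\mu=\sum_{\substack{T\in\mathcal T_\lambda\\ \lambda(T)=\mu}}(-1)^{\operatorname{ht}(T)}.
\]
Grouping the special rim-hook tabloids $T\in\mathcal T_\lambda$ according to $\lambda(T)$ then gives the displayed formula immediately.

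For this identity I will use the Hall inner product. The monomial basis is dual to the complete homogeneous basis, and the Schur basis is self-dual, so $[s_\lambda]m_\mu=\langle m_\mu,s_\lambda\rangle$ is the entry $K^{-1}_{\mu\lambda}$ of the inverse Kostka matrix. Equivalently, it is the coefficient of $h_\mu$ in the expansion $s_\lambda=\sum_\mu K^{-1}_{\mu\lambda}h_\mu$. I will obtain this expansion from the Jacobi--Trudi identity $s_\lambda=\det\bigl(h_{\lambda_i-i+j}\bigr)$. Expanding the determinant produces a signed sum over permutations $\sigma$, where each term is a product $h_{\lambda_i-i+\sigma(i)}$.

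The main step is the combinatorial bijection of Eğecioğlu and Remmel. Each nonzero term of the determinant corresponds to a special rim-hook tabloid of shape $\lambda$: every rim hook starts in the first column, and the row $i$ contributes a hook of length $\lambda_i-i+\sigma(i)$ ending in a position determined by $\sigma(i)$. Under this correspondence the sign $\operatorname{sgn}(\sigma)$ equals $(-1)^{\operatorname{ht}(T)}$. Collecting terms with equal multisets of hook lengths yields
\[
K^{-1}_{\mu\lambda}=\sum_{T\in\mathcal T_\lambda,\ \lambda(T)=\mu}(-1)^{\operatorname{ht}(T)}.
\]
I expect this step, making the sign and the shape of each hook match precisely, to be the only real obstacle. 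Since the result is classical (Eğecioğlu--Remmel), I will cite it rather than reprove it, and the theorem follows by substituting it into the first display. Throughout, the convention on which side of the matrix the inverse Kostka indices sit must be kept consistent with the definition of $\mathcal T_\lambda$.
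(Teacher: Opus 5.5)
Your proposal is correct. Since $a_\mu(G)=[m_\mu]X_G$, the formula is equivalent to the claim that $[s_\lambda]m_\mu=K^{-1}_{\mu\lambda}=[h_\mu]s_\lambda$ is the signed count of special rim-hook tabloids of shape $\lambda$ and type $\mu$, which is the E\u{g}ecio\u{g}lu--Remmel theorem obtained from the Jacobi--Trudi expansion. The paper gives no proof of its own and only cites Wang--Wang; your reduction is the standard derivation behind that formula, and citing E\u{g}ecio\u{g}lu--Remmel for the sign-matching step, rather than relying on your somewhat loose description of which hook each row contributes, is the right call.
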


Wang and Wang~\cite{WangWang2020} used this formula to study the
Schur positivity of several graph families. In particular, they
completely characterized the Schur positive fan and complete
tripartite graphs, and showed that certain families of squid and
pineapple graphs are not $s$-positive. They also conjectured that the squid graph $\operatorname{Sq}(2n-1;1^n)$ is not Schur positive for every $n\ge3$~\cite{WangWang2020}. Li, Li, Yang, and Zhang~\cite{LiEtAlStrong} proved this conjecture by showing that these squid graphs are not strongly nice; they also proved that the same graphs are nice. Their paper also contains a six-vertex graph that is strongly nice but not Schur positive~\cite{LiEtAlStrong}; this graph is isomorphic to $K_{3,3}-e$.

Stanley~\cite{Stanley1998}, attributing the conjecture to Gasharov~\cite{Gasharov1999}, stated that every claw-free graph is Schur positive. Prajapati~\cite{Prajapati2026} and Matherne and Morales~\cite{MatherneMorales2026} independently disproved the conjecture by finding the same two $12$-vertex line-graph counterexamples. Wang, Zhang, and Zhao~\cite{WangZhangZhao2026} subsequently constructed two infinite families of counterexamples: every graph in the first family is a line graph, while no graph in the second family is a line graph. They also determined minimum counterexamples under two lexicographic orders and proved a Schur-coefficient transfer formula for graphs obtained by adjoining a clique through a single bridge~\cite{WangZhangZhao2026}.

For distributive lattices, Lonc and Elzobi~\cite{LoncElzobi1999}
characterized chain partitions of products of two chains.
Li, Qiu, Yang, and Zhang~\cite{LQYZ2025} explained that this
characterization implies that products of two chains are nice.
Li, Qiu, Yang, and Zhang~\cite{LQYZ2025} also constructed a family of distributive lattices that are not
nice, and hence not Schur positive, as well as an infinite family of
products of two chains that are nice but not Schur positive.
Wang and Zhang~\cite{WangZhang2026} later determined the exact
Schur-positivity thresholds for $\mathbf m\times\mathbf 2$ and
$\mathbf m\times\mathbf 3$, and proved that
$\mathbf m\times\mathbf 3$ is not strongly nice for $m\ge 44$.
The proofs that the above products of two chains are not $s$-positive use the
Wang--Wang Schur-coefficient formula~\cite{WangWang2020}.

The known examples show that the converses among Schur positivity, the strongly nice property, and the nice property fail, but they leave several structural questions. Li, Li, Yang, and Zhang~\cite{LiEtAlStrong} exhibited the six-vertex graph $K_{3,3}-e$ that is strongly nice but not Schur positive. It is natural to ask whether this phenomenon persists in an infinite family of connected graphs. A second question concerns disjoint unions. The strongly nice property is defined by monomial-coefficient inequalities, whereas Schur coefficients multiply through Littlewood--Richardson convolution, so the two properties need not behave in the same way under disjoint union. Finally, the nice property records only which monomial coefficients are nonzero, whereas the strongly nice property compares their actual values along dominance order. This suggests introducing intermediate conditions that retain a finite amount of coefficient information.

These questions motivate the three families considered below. The first is a connected family of strongly nice graphs that are not Schur positive. For the second, we prove a multiplicative closure theorem for homogeneous strongly nice symmetric functions with nonnegative monomial coefficients, and hence closure under disjoint union for strongly nice graphs; we also determine exactly when $H\sqcup K_t$ is Schur positive. The third is a connected family of nice graphs that are not strongly nice and leads to a quantitative refinement between the nice and strongly nice properties.

For the first family, let
\[
 H=K_{3,3}-e.
\]
Fix an endpoint $a$ of the deleted edge, take a disjoint complete graph
\[
 K_t=\{w,c_2,\ldots,c_t\},
\]
and add only the single edge $aw$. Denote the resulting connected graph by $F_t$.

\begin{theorem}\label{thm:intro-F}
For every $t\ge6$, the graph $F_t$ is connected and strongly nice, but it is not Schur positive. More precisely,
\[
 [s_{(3,3,3,1^{t-3})}]X_{F_t}
 =-4(t-1)(t-1)!<0.
\]
\end{theorem}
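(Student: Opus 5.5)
The plan is to work entirely with the monomial coefficients $a_\lambda(F_t)$, computed by counting stable partitions. Recall that $a_\lambda(G)=\bigl(\prod_i r_i!\bigr)\cdot s_\lambda(G)$, where $r_i$ is the multiplicity of $i$ in $\lambda$ and $s_\lambda(G)$ is the number of partitions of $V(G)$ into independent sets with block sizes $\lambda$. Equivalently, $a_\lambda(G)$ is the number of proper colorings in which color $i$ is used exactly $\lambda_i$ times.

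\emph{Structure of stable partitions.} Write $H$ with sides $A=\{a,x,y\}$ and $B=\{b,u,v\}$, where $ab$ is the deleted edge. The independent sets of $H$ are:
\begin{itemize}
\item the empty set and the singletons;
\item the pairs inside $A$ or inside $B$, together with the pair $\{a,b\}$;
\item the triples $A$ and $B$.
\end{itemize}
An independent set of $F_t$ is an independent set of $H$ together with at most one clique vertex, subject to the restriction that $w$ and $a$ are not both present. Hence every stable partition of $F_t$ is obtained in two steps. First choose one of the finitely many stable partitions $\pi$ of $H$. Then place the $t$ clique vertices into distinct blocks of $\pi$ or into new singleton blocks, avoiding the block of $a$ for $w$.

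It follows that every block has size at most $4$. Moreover, for each type $\lambda=(4^p3^q2^r1^s)\vdash t+6$, the count $a_\lambda(F_t)$ is an explicit finite sum of products of factorial and falling-factorial expressions in $t$. I will tabulate these expressions according to the type of $\pi$.

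\emph{Schur coefficient.} Connectivity of $F_t$ is clear, since $H$ is connected and $aw$ joins it to $K_t$. For the coefficient I apply Theorem~\ref{thm:intro-WW} with $\lambda=(3,3,3,1^{t-3})$. Because $a_\mu(F_t)=0$ whenever $\mu$ has a part larger than $4$, only special rim-hook tabloids all of whose ribbons have size at most $4$ contribute. Each ribbon of a special rim-hook tabloid must meet the first column. This leaves a short list of tabloids:
\begin{itemize}
\item ribbons confined to the $3\times3$ top block, together with vertical dominoes, trominoes and tetrominoes in the first-column tail;
\item hooks passing from the tail into the top rows.
\end{itemize}
I will enumerate these by how many ribbons enter the top block, and then use the tail's vertical ribbons to express the sum as a signed sum over compositions of the tail. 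I then substitute the tabulated values of $a_\mu(F_t)$. The expectation is heavy cancellation down to $-4(t-1)(t-1)!$. A sanity check is that $t=6$ should agree with a direct computation.

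\emph{Strong niceness.} It suffices to check $a_\lambda\le a_\mu$ for covering relations $\lambda\gtrdot\mu$ in dominance order. These covers move one cell from part $i$ to part $j$, with either $\lambda_i=\lambda_j+2$ or consecutive distinct part sizes, possibly creating a new part of size $1$. Since all parts lie in $\{1,2,3,4\}$, the covers fall into finitely many shapes, for instance
\[
4\,1\to3\,2,\qquad 4\,2\to3\,3,\qquad 3\,1\to2\,2,\qquad 2\,\to1\,1,\qquad 4\to3\,1,
\]
each parametrized by $(p,q,r,s)$ with $4p+3q+2r+s=t+6$. For each shape I will compare the two explicit expressions. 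Where possible I will use an injection on colorings, recoloring a single clique vertex $c_k$ (not $w$) from a class of size $\lambda_i$ into a class of size $\lambda_j$. Such a move is always legal for $c_k$ whenever the target class contains no clique vertex, and clique vertices are numerous when $t\ge6$. Only where the injection breaks will I fall back on direct comparison.

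\emph{Main obstacle.} I expect the main difficulty to be the cases where the injection fails. These are covers in which every class involved already contains a clique vertex, or in which the moved cell must come from $H$ itself, for example when $p$ is maximal and all $4$-blocks are forced. In these cases I will compare the polynomial-in-$t$ formulas directly, using $t\ge6$ to make the dominant terms win. The remaining small-$t$ boundary cases will be checked by hand.
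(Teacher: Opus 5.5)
\textbf{Verdict: genuine gap.} Your outline points the right way, but it stops where the proof begins. For strong niceness it follows the paper's route: lift stable partitions of $H$ through the clique, then compare coefficients along dominance covers. For the Schur coefficient you choose a legitimate alternative, applying Theorem~\ref{thm:intro-WW} directly. The paper instead uses the Wang--Zhang--Zhao bridge-transfer theorem together with $[s_{222}]X_H=-4$, which gives the value in one line. In your proposal, however, the value $-4(t-1)(t-1)!$ is only ``expected'' from ``heavy cancellation''; no coefficient $a_\mu(F_t)$ is computed and no cover inequality is checked. The missing observation that makes both halves finite is this: the $t$ clique vertices lie in distinct blocks, so every stable partition of $F_t$ has at least $t$ blocks. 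Hence $a_\mu(F_t)\ne0$ forces $\ell(\mu)\ge t$, not just $\mu_1\le4$.

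\textbf{The Schur coefficient.} With this bound your tabloid enumeration collapses, and there is no cancellation to look for. Distinct ribbons of a special rim-hook tabloid of shape $(3,3,3,1^{t-3})$ meet distinct first-column cells, so there are at most $t$ ribbons. A nonzero weight needs exactly $t$ ribbons, each meeting the first column once. So every tabloid with a vertical domino, tromino, tetromino, or a hook from the tail has weight zero. The tail is all singletons, and the $3\times3$ block is cut, bottom to top, into ribbons of sizes $(3,3,3)$ with sign $+$, $(3,4,2)$ or $(4,2,3)$ with sign $-$, or $(4,4,1)$ with sign $+$. Therefore
\[
[s_{(3,3,3,1^{t-3})}]X_{F_t}=a_{(3,3,3,1^{t-3})}-2a_{(4,3,2,1^{t-3})}+a_{(4,4,1^{t-2})}=(t-1)(t-1)!\,(6-12+2).
\]
Each of the three coefficients is immediate, because every block contains exactly one clique vertex:
\begin{itemize}
\item the $H$-part is one of the $1$, $6$, $1$ stable partitions of $H$ of types $(2,2,2)$, $(3,2,1)$, $(3,3)$;
\item the clique vertices can be placed in $(t-1)^2(t-2)$, $(t-1)^2(t-2)$, $(t-1)^2$ ways, respectively;
\item the semi-ordering factors are $3!(t-3)!$, $(t-3)!$, $2!(t-2)!$.
\end{itemize}
Carried out this way, your route is a complete, self-contained substitute for the bridge-transfer citation.

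\textbf{Strong niceness.} The same bound gives $3p+2q+r\le6$, so your $(p,q,r,s)$ families reduce to a fixed list of $23$ types below $(4,4,1^{t-2})$ and finitely many covers. Their polynomial inequalities must then be checked exactly for all $t\ge6$, as in \cref{tab:coverdiff}. Your recoloring injection will not carry this: a coloring of type $\mu$ has exactly $\ell(\mu)-t$ clique-free classes, so at $(4,4,1^{t-2})$ there are none and the map is undefined at the top. Nor will ``dominant terms win'' work in general. The cover $(4,3,2,1^{t-3})\gtrdot(3,3,3,1^{t-3})$ is an equality, with both coefficients equal to $6(t-1)(t-1)!$, so exact formulas are required.
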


Thus the separation between the strongly nice property and Schur positivity is not confined to a single small example: it persists in an infinite family of connected graphs. To prove that $F_t$ is strongly nice, we classify the stable partitions of the six-vertex graph $H$ and lift them through the clique by a restricted injection rule. This gives exactly $23$ nonzero monomial types and formulas for all corresponding coefficients. Their dominance poset has $33$ covering relations. We verify the required coefficient inequality on these $33$ covers and use transitivity. The displayed negative Schur coefficient follows from the bridge Schur-coefficient transfer formula of Wang, Zhang, and Zhao~\cite{WangZhangZhao2026}.

For the second construction, we first prove that the product of two homogeneous strongly nice symmetric functions with nonnegative monomial coefficients is strongly nice. Hence strongly nice graphs are closed under disjoint union. We then define
\[
 M_t=H\sqcup K_t,\qquad t\ge1.
\]

\begin{theorem}\label{thm:intro-M}
For every $t\ge1$, the graph $M_t$ is disconnected and strongly nice. Moreover, $M_t$ is Schur positive if and only if $t=1$ or $t=2$. For every $t\ge3$,
\[
 [s_{(3,3,3,1^{t-3})}]X_{M_t}=-4t!<0.
\]
\end{theorem}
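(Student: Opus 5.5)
The plan has four parts, in this order.

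\emph{Disconnectedness and strong niceness.} $M_t$ is disconnected by construction. Since $X_{M_t}=X_HX_{K_t}$, strong niceness should follow from the multiplicative closure theorem announced above: the product of two homogeneous strongly nice symmetric functions with nonnegative monomial coefficients is strongly nice. It then suffices to know two facts. First, $X_H$ is strongly nice; this is the result of Li, Li, Yang, and Zhang~\cite{LiEtAlStrong} for $K_{3,3}-e$, and it can be rechecked from the stable-partition classification of $H$ used for $F_t$. Second, $X_{K_t}=t!\,m_{1^t}=t!\,e_t$, which is trivially strongly nice because $a_\lambda(K_t)=0$ except at the bottom element $1^t$ of the dominance order.

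\emph{Schur positivity for $t=1,2$.} Since $X_{M_t}=t!\,X_H e_t$, Pieri's rule gives $X_H e_t=\sum_\lambda c_\lambda(H)\sum_\nu s_\nu$, where $\nu/\lambda$ ranges over vertical $t$-strips. So I would first compute the Schur expansion of $X_H$ explicitly, using Theorem~\ref{thm:intro-WW} with the monomial coefficients of $H$ (six vertices, so this is a finite computation). The negative coefficients of $X_H$ should be few; I expect essentially one, at $s_{(3,3)}$ or a nearby shape. For $t=1,2$ I would then check directly that every $\nu$ receiving a negative contribution from such a $\lambda$ also receives enough positive contribution from other $\lambda$ with $\nu/\lambda$ a vertical strip. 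This is a finite verification with $\nu\vdash 7$ and $\nu\vdash 8$.

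\emph{The negative coefficient for $t\ge3$.} I would compute $[s_{(3,3,3,1^{t-3})}]X_HX_{K_t}$ in one of two ways. The first is through Pieri: sum $c_\lambda(H)$ over all $\lambda\vdash6$ such that $(3,3,3,1^{t-3})/\lambda$ is a vertical $t$-strip. Such a strip removes at most one cell from each row, and it must remove every cell of the tail of $1$'s in rows $4,\dots,t$ (there are $t-3$ of these). The remaining three cells come from rows $1$--$3$, and for $\lambda$ to be a partition they must be the last cells of rows $1,2,3$. Hence the only possibility is $\lambda=(2,2,2)$, and the coefficient is $t!\,[s_{222}]X_H$. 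The target value $-4t!$ then requires $[s_{222}]X_H=-4$, which I would verify with the Wang--Wang formula on the monomial coefficients of $H$.

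The second way, as a cross-check, is to apply Theorem~\ref{thm:intro-WW} directly to $M_t$, using $a_\mu(M_t)=t!\sum a_\alpha(H)$ over the ways of merging $\alpha$ with $1^t$. I prefer the Pieri route.

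\emph{Remaining cases $t\ge3$ and the main obstacle.} For $t\ge3$, non-Schur-positivity is immediate from the displayed coefficient $-4t!$. The main obstacle is the explicit Schur expansion of $X_H$, which rests on a careful enumeration of the stable partitions of $K_{3,3}-e$, and the finite positivity check for $t=2$. If the hand computation becomes unwieldy, I would organize it by listing the $a_\lambda(H)$ for all $\lambda\vdash6$ and applying the inverse Kostka matrix.
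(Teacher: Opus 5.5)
Your proposal is correct and follows essentially the same route as the paper: strong niceness from the product-closure theorem (your direct check that $X_{K_t}=t!\,e_t$ is strongly nice replaces the paper's appeal to Schur positivity), explicit Pieri expansions of $t!\,X_He_t$ for $t=1,2$, and, for $t\ge3$, the fact that $(3,3,3,1^{t-3})$ has exactly $t$ rows, so only $\lambda=(2,2,2)$ contributes. One correction: your guess about where $X_H$ is negative is off. Its only negative Schur coefficient is $[s_{222}]X_H=-4$, while $[s_{33}]X_H=2$. In the $t=1,2$ checks this $-4$ cancels exactly against the $4s_{321}$ term, leaving coefficient $0$ at $s_{322}$ for $t=1$ and at $s_{332}$ for $t=2$.
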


Thus a non-Schur-positive component need not force the whole disjoint union to be non-Schur-positive: both $H\sqcup K_1$ and $H\sqcup K_2$ are Schur positive. The proof uses a short Pieri criterion for $G\sqcup K_t$; for $t\ge3$, the negative coefficient $[s_{222}]X_H=-4$ is isolated by the shape $(3,3,3,1^{t-3})$.

For the third family, recall that the join $G\vee H$ of two vertex-disjoint graphs is obtained from $G\sqcup H$ by adding every edge between the two vertex sets. Define
\[
 N_r=K_r\vee(K_2\sqcup2K_1),\qquad r\ge2.
\]

\begin{theorem}\label{thm:intro-N}
For every $r\ge2$, the graph $N_r$ is connected and nice but not strongly nice.
\end{theorem}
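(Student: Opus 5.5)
Throughout, write $N_r=K_r\vee G_0$ with $G_0=K_2\sqcup 2K_1$ on the vertices $x,y$ (the edge) and $u,v$ (the isolated vertices); $n=r+4$. Connectivity is immediate for $r\ge1$, since every vertex of $K_r$ is adjacent to everything. The plan is to compute all monomial coefficients $a_\lambda(N_r)$ explicitly from stable partitions. Each vertex of $K_r$ is adjacent to every other vertex, so in any stable partition it forms a singleton block. Hence a stable partition of $N_r$ is $r$ singletons together with a stable partition of $G_0$. Recall that $a_\lambda(G)$ equals the number of stable partitions of type $\lambda$ multiplied by $\prod_i m_i(\lambda)!$, where $m_i(\lambda)$ is the multiplicity of part $i$.

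The stable partitions of $G_0$ are the independent-set partitions of $\{x,y,u,v\}$ with $x,y$ in different blocks. Their types are:
\begin{itemize}
\item $(3,1)$: 2 partitions, namely $\{x,u,v\},\{y\}$ and $\{y,u,v\},\{x\}$;
\item $(2,2)$: 2 partitions, namely $\{x,u\},\{y,v\}$ and $\{x,v\},\{y,u\}$;
\item $(2,1,1)$: 5 partitions, from $\{u,v\}$, $\{x,u\}$, $\{x,v\}$, $\{y,u\}$, $\{y,v\}$;
\item $(1^4)$: 1 partition.
\end{itemize}
So the nonzero coefficients of $X_{N_r}$ occur exactly at
\[
\lambda\in\{(3,1^{r+1}),\ (2,2,1^{r}),\ (2,1^{r+2}),\ (1^{r+4})\},
\]
with values
\[
a_{31^{r+1}}=2(r+1)!,\quad a_{221^r}=2\cdot2!\,r!=4r!,\quad a_{21^{r+2}}=5(r+2)!,\quad a_{1^{r+4}}=(r+4)!.
\]

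For niceness I would check downward closure of the support under dominance. The partitions of $n$ lying below $(3,1^{r+1})$ are exactly those with largest part at most $3$ and at most one part equal to $3$. Among these, the ones with a part $3$ that are not $(3,1^{r+1})$ itself would need a second part of size at least $2$, giving $(3,2,\ldots)$, which does not lie below $(3,1^{r+1})$ since $3+2>3+1$. So the partitions below $(3,1^{r+1})$ are $(3,1^{r+1})$ together with those having largest part $\le2$ and at most one $2$. Those are $(2,1^{r+2})$ and $(1^n)$, all in the support. Below $(2,2,1^r)$ lie only $(2,1^{r+2})$ and $(1^n)$, and the bottom of the chain is clear. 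Hence the support is an order ideal and $N_r$ is nice.

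For failure of strong niceness I would use the cover $(3,1^{r+1})>(2,2,1^r)$. This is a genuine dominance relation, with partial sums $3\ge2$ and $4\ge4$ thereafter. But
\[
a_{31^{r+1}}=2(r+1)!>4r!=a_{221^r},
\]
since this inequality is equivalent to $2(r+1)>4$, i.e. $r>1$. This holds for every $r\ge2$.

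The only delicate point is correct bookkeeping of the multiplicity factor $\prod_i m_i(\lambda)!$ in $a_\lambda$, in particular the factor $2!$ at the two parts equal to $2$. It is exactly this factor that makes $r=1$ fail and $r\ge2$ succeed, so I would double-check it against the definition $X_G=\sum_\pi \tilde m_{\operatorname{type}(\pi)}$ with augmented monomials $\tilde m_\lambda=\prod_i m_i(\lambda)!\,m_\lambda$. As a further check, the value $4r!$ matches the abstract's remark about the "level depth $4r!$".
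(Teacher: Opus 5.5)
Your route is the paper's. The vertices of $K_r$ are forced singletons, and the ten stable partitions of $K_2\sqcup2K_1$ give the coefficients $2(r+1)!$, $4r!$, $5(r+2)!$, $(r+4)!$. The support is the down-set of $(3,1^{r+1})$, a four-element chain. The cover $(3,1^{r+1})\gtrdot(2,2,1^r)$ with $2(r+1)!>4r!$ rules out strong niceness. Your coefficient bookkeeping, including the factor $2!$ for the two parts equal to $2$, is correct.

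The downward-closure step, however, is wrong as written. You conclude that the partitions below $(3,1^{r+1})$ are $(3,1^{r+1})$ together with those having largest part at most $2$ and at most one $2$, namely $(2,1^{r+2})$ and $(1^{r+4})$. This omits $(2,2,1^r)$, which does lie below $(3,1^{r+1})$. It is exactly the cover you use in your next paragraph, so the two halves of your argument contradict each other. That false ``at most one $2$'' rule is also the only thing you use to exclude partitions such as $(2,2,2,1^{r-2})$, so that exclusion is unjustified. Likewise, your opening claim (``exactly those with largest part at most $3$ and at most one part equal to $3$'') is only a necessary condition. For example, $(3,2,1^{r-1})$ and $(2,2,2,1^{r-2})$ satisfy it but are not below $(3,1^{r+1})$.

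The correct statement is this: if every part of $\mu$ is at most $2$, then $\mu\le(3,1^{r+1})$ if and only if $\mu$ has at most two parts equal to $2$. With at most two $2$'s one has $\mu_1+\cdots+\mu_k\le\min\{k+2,\,r+4\}$, which is the $k$-th partial sum of $(3,1^{r+1})$. With three or more $2$'s the third partial sum is $6>5$. Combined with your correct treatment of the case $\mu_1=3$, the down-set of $(3,1^{r+1})$ is exactly $\{(3,1^{r+1}),(2,2,1^r),(2,1^{r+2}),(1^{r+4})\}$, which is the support. Since every support element lies below $(3,1^{r+1})$, this alone gives niceness, and your separate check below $(2,2,1^r)$ becomes unnecessary. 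With this repair the proof is complete.
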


The family $N_r$ allows the difference between the nice and strongly nice properties to be measured explicitly. For $k\ge1$, we call $G$ \emph{level-$k$ nice} if, for every pair of partitions $\lambda,\mu$ with $\lambda\ge\mu$ in dominance order and every integer $1\le j\le k$, the inequality $a_\lambda(G)\ge j$ implies $a_\mu(G)\ge j$. The level-$1$ nice property is the nice property, while a graph is strongly nice if and only if it is level-$k$ nice for every $k$. Define
\[
 d(G):=\sup\bigl(\{k\in\N:G\text{ is level-}k\text{ nice}\}\cup\{0\}\bigr),
\]
with $d(G)=\infty$ when the condition holds for every $k$. We prove
\[
 d(N_r)=4r!.
\]
In particular, the level depth of $N_r$ tends to infinity although no $N_r$ is strongly nice. Hence, for every fixed $k$, there are connected graphs that are level-$k$ nice but not strongly nice; no fixed finite level gives an equivalent formulation of the strongly nice property.

The paper is organized as follows. \Cref{sec:prelim} gives the definitions and symmetric-function facts used below, including stable partitions, semi-ordered stable partitions, the vertical Pieri rule, and multiplicativity of chromatic symmetric functions on disjoint unions. \Cref{sec:strong-not-s} constructs the connected family $F_t$ and proves that it is strongly nice but not Schur positive. \Cref{sec:disconnected-strong-not-s} proves the multiplicative closure theorem and studies Schur positivity of $H\sqcup K_t$, yielding the disconnected separating family for $t\ge3$. \Cref{sec:nice-not-strong} studies the connected family $N_r$ and introduces the level-$k$ nice property. The appendix records the $33$ normalized coefficient differences used to verify that $F_t$ is strongly nice.

\section{Preliminaries}\label{sec:prelim}

\subsection{Partitions and dominance order}

A \emph{partition} of a positive integer $n$ is a weakly decreasing sequence
\[
 \lambda=(\lambda_1,\ldots,\lambda_\ell),\qquad
 \lambda_1\ge\cdots\ge\lambda_\ell>0,
 \qquad \sum_{i=1}^\ell\lambda_i=n.
\]
We write $\lambda\vdash n$, $|\lambda|=n$, and $\ell(\lambda)=\ell$.  We set $\lambda_i=0$ when $i>\ell(\lambda)$.  Exponents indicate repeated parts; for instance, $(3,1^4)=(3,1,1,1,1)$.

For partitions $\lambda,\mu\vdash n$, we say that $\lambda$ \emph{dominates} $\mu$, written $\lambda\ge\mu$, if
\[
 \sum_{i=1}^k\lambda_i\ge\sum_{i=1}^k\mu_i
 \qquad\text{for every }k\ge1.
\]
A covering relation is written $\lambda\gtrdot\mu$: this means $\lambda>\mu$ and there is no partition $\nu$ with $\lambda>\nu>\mu$. Every dominance relation between integer partitions can be obtained by finitely many unit balancing transfers, each replacing two parts $p\ge q+2$ by $p-1$ and $q+1$ and then reordering; see \cite{MarshallOlkinArnold2011}.

\subsection{Monomial and Schur bases, Kostka numbers, and the Pieri rule}

Let $\Lambda$ be the ring of symmetric functions over $\mathbb Q$ in variables $x_1,x_2,\ldots$. We use the monomial and Schur bases, denoted by
\[
 \{m_\lambda\},\qquad \{s_\lambda\}.
\]
For $\lambda=(\lambda_1,\ldots,\lambda_\ell)$, the monomial symmetric function is
\[
 m_\lambda=\sum_\alpha x^\alpha,
\]
where $\alpha$ ranges over the distinct permutations of
$(\lambda_1,\ldots,\lambda_\ell,0,0,\ldots)$ and
$x^\alpha=x_1^{\alpha_1}x_2^{\alpha_2}\cdots$.

The Schur and monomial bases are related by
\[
 s_\nu=\sum_{\mu}K_{\nu\mu}m_\mu,
\]
where $K_{\nu\mu}$ is the Kostka number. We write $[b_\lambda]f$ for the coefficient of $b_\lambda$ in the expansion of $f$ in a basis $\{b_\lambda\}$.

We will also use the vertical Pieri rule. For a partition $\lambda$ and an integer $r\ge1$,
\[
 s_\lambda e_r=\sum_\mu s_\mu,
\]
where the sum ranges over all partitions $\mu$ for which $\mu/\lambda$ is a vertical strip of size $r$, that is, a skew diagram with at most one cell in each row. Standard references for these facts include Macdonald~\cite{Macdonald1995} and Stanley~\cite{StanleyEC2}.

\subsection{Chromatic symmetric functions and stable partitions}

\begin{definition}\label{def:stable-partition}
Let $G=(V,E)$ be a finite simple graph. A subset $B\subseteq V$ is \emph{stable} if no two vertices of $B$ are adjacent. A \emph{stable partition} of $G$ is a set partition
\[
 \mathcal B=\{B_1,\ldots,B_k\}
\]
of $V$ such that every block $B_i$ is stable. Its \emph{type} is the partition obtained by arranging $|B_1|,\ldots,|B_k|$ in weakly decreasing order.

A \emph{semi-ordered stable partition} is a stable partition in which the blocks of each fixed size are linearly ordered.
\end{definition}

If a stable partition has type $\lambda$ and $m_j(\lambda)$ denotes the multiplicity of the part $j$ in $\lambda$, then it determines
\begin{equation}\label{eq:semi-factor}
 \prod_{j\ge1}m_j(\lambda)!
\end{equation}
semi-ordered stable partitions.

Stanley gave the following combinatorial monomial expansion of $X_G$ \cite{Stanley1995}.

\begin{proposition}[Stanley]\label{prop:stanley-m}
Let $\widetilde a_\lambda(G)$ be the number of semi-ordered stable partitions of $G$ of type $\lambda$.  Then
\[
 X_G=\sum_{\lambda\vdash |V|}\widetilde a_\lambda(G)m_\lambda.
\]
Consequently,
\[
 [m_\lambda]X_G=\widetilde a_\lambda(G)\in\mathbb Z_{\ge0}.
\]
\end{proposition}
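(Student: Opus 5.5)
The plan is to expand $X_G$ directly from its definition. We group proper colorings according to the partition of $V$ induced by the color classes, and then read off the coefficient of a single representative monomial.

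\textbf{Step 1: reduce to one monomial.} Since $X_G$ is symmetric (permuting colors preserves properness), $[m_\lambda]X_G$ equals the coefficient of the single monomial
\[
x_1^{\lambda_1}x_2^{\lambda_2}\cdots x_\ell^{\lambda_\ell},\qquad \ell=\ell(\lambda),
\]
in $X_G$. That coefficient is the number of proper colorings $\kappa:V\to\N$ that use color $i$ exactly $\lambda_i$ times for $1\le i\le\ell$ and use no other colors.

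\textbf{Step 2: biject these colorings with semi-ordered stable partitions of type $\lambda$.} Given such a $\kappa$, the nonempty fibers $B_i=\kappa^{-1}(i)$ are stable, because $\kappa$ is proper, and $|B_i|=\lambda_i$. So $\{B_1,\ldots,B_\ell\}$ is a stable partition of type $\lambda$. The colors also order the blocks of each fixed size $j$: those blocks are exactly the $B_i$ with $\lambda_i=j$, and they inherit the order of their indices $i$. This produces a semi-ordered stable partition.

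Conversely, suppose we are given a stable partition of type $\lambda$ together with linear orders on its blocks of each size. The indices $i$ with $\lambda_i=j$ form a consecutive interval of length $m_j(\lambda)$. We assign the ordered blocks of size $j$ to these indices in order and color each block $B_i$ with color $i$. The result is a proper coloring of the required content, and the two maps are mutually inverse.

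\textbf{Step 3: conclude.} The coefficient equals $\widetilde a_\lambda(G)$, which is obviously a nonnegative integer. By \eqref{eq:semi-factor}, it also equals $\prod_j m_j(\lambda)!$ times the number of stable partitions of type $\lambda$.

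The only delicate point is Step 2. We must make sure that equal-size blocks are distinguished by their colors, which gives the ordering, while blocks of different sizes are forced into their colors by $\lambda$. This is exactly why the count uses semi-ordered rather than fully ordered or unordered partitions.
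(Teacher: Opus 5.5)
Your proof is correct. You extract the coefficient of $x_1^{\lambda_1}\cdots x_\ell^{\lambda_\ell}$ and match proper colorings of that content with semi-ordered stable partitions through their color classes; the paper gives no proof of its own and simply cites Stanley, whose justification is this same direct count from the definition.
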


We abbreviate
\[
 a_\lambda(G):=[m_\lambda]X_G.
\]

For vertex-disjoint graphs $G_1$ and $G_2$, the definition of the chromatic symmetric function gives
\[
 X_{G_1\sqcup G_2}=X_{G_1}X_{G_2},
\]
since a proper coloring of the disjoint union is obtained by choosing proper colorings of its two components independently. This multiplicativity will be used in \cref{sec:disconnected-strong-not-s}.

\subsection{Nice and strongly nice}

\begin{definition}[Stanley~\cite{Stanley1998}]\label{def:nice}
A graph $G$ is \emph{nice} if, for all partitions
$\lambda,\mu\vdash |V(G)|$ with $\lambda\ge\mu$ in dominance order,
the positivity of $a_\lambda(G)$ implies the positivity of
$a_\mu(G)$.
Equivalently, every stable-partition type occurring above $\mu$ in
dominance order forces the type $\mu$ to occur.
\end{definition}

\begin{definition}[Li--Li--Yang--Zhang~\cite{LiEtAlStrong}]
\label{def:strong}
A homogeneous symmetric function
$f=\sum_\lambda a_\lambda m_\lambda$ is \emph{strongly nice} if
$a_\lambda\le a_\mu$ for all partitions $\lambda,\mu$ satisfying
$\lambda\ge\mu$ in dominance order.
A graph $G$ is strongly nice if $X_G$ is strongly nice.
\end{definition}

The following two results are used throughout the paper.

\begin{theorem}[Stanley~\cite{Stanley1998}]\label{thm:stanley-s-nice}
Every Schur positive graph is nice.
\end{theorem}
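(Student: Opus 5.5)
Write $X_G=\sum_\nu c_\nu s_\nu$ with every $c_\nu\ge0$. Expanding each Schur function in the monomial basis via Kostka numbers gives
\[
a_\lambda(G)=\sum_\nu c_\nu K_{\nu\lambda}.
\]
The plan is to show that, for a fixed $\nu$, the support of $\lambda\mapsto K_{\nu\lambda}$ is closed downward in dominance order. A nonnegative combination of such functions then has a downward-closed support. This is exactly the nice property: if $a_\lambda(G)>0$ and $\lambda\ge\mu$, then $a_\mu(G)>0$.

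The key fact is the classical criterion $K_{\nu\lambda}>0$ if and only if $\nu\ge\lambda$ (see Macdonald~\cite{Macdonald1995}). Granting it, the argument is short. Suppose $a_\lambda(G)>0$. Then some $\nu$ has $c_\nu>0$ and $K_{\nu\lambda}>0$, so $\nu\ge\lambda\ge\mu$. By transitivity of dominance, $K_{\nu\mu}>0$. All terms in the sum for $a_\mu(G)$ are nonnegative, so
\[
a_\mu(G)\ge c_\nu K_{\nu\mu}>0.
\]

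The only substantive step is the implication $\nu\ge\mu\Rightarrow K_{\nu\mu}>0$. The converse is the easy direction: in a semistandard tableau, entries $\le k$ must lie in the first $k$ rows.

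To prove the needed implication, I would use the unit balancing transfers recalled in the preliminaries, together with the fact that $K_{\nu\mu}$ does not depend on the order of the content. I would first note that $K_{\nu\nu}=1$, since the tableau with row $i$ filled by $i$'s is the only one of shape and content $\nu$. It then suffices to show that positivity of $K_{\nu\mu}$ is preserved when $\mu$ is replaced by the result of a transfer, that is, by moving one unit from a larger part $\mu_i$ to a smaller part $\mu_j$ with $\mu_i\ge\mu_j+2$.

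For this I would use the Bender--Knuth involution on the letters $i,j$. If $i,j$ are adjacent, it gives a bijection between SSYT of content $\alpha$ and of content $s_i\alpha$. Composing such bijections lets me treat any two parts as adjacent. For the actual transfer, take a tableau of content $(\dots,p,q,\dots)$ with $p\ge q+2$. Among the entries equal to $i$ that are not directly above an $i+1$, the free ones form runs in the rows. Changing the rightmost free $i$ in a suitable row to $i+1$ keeps the tableau semistandard and produces content $(\dots,p-1,q+1,\dots)$. The condition $p>q$ guarantees that a free $i$ exists.

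Alternatively, one can cite this fact directly, since it is standard. I expect the main obstacle to be only the careful verification of the free-entry argument. If needed, I would simply invoke the classical statement $K_{\nu\mu}>0\iff\nu\ge\mu$ from Macdonald.
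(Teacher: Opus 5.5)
Your proof is correct, and it is essentially Stanley's original argument. You expand $a_\lambda(G)=\sum_\nu c_\nu K_{\nu\lambda}$ and use that $K_{\nu\lambda}>0$ exactly when $\nu\ge\lambda$. The support of $\lambda\mapsto a_\lambda(G)$ is then a union of principal down-sets, hence downward closed.

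The paper does not reprove this statement; it only cites Stanley. Within the paper the statement is also obtained as a corollary of \cref{thm:li-s-strong}, whose proof uses White's monotonicity $K_{\nu\lambda}\le K_{\nu\mu}$ for $\lambda\ge\mu$. That route needs a quantitative input and yields the stronger conclusion $a_\lambda(G)\le a_\mu(G)$, i.e.\ strong niceness. Your route needs only the support of the Kostka matrix, which is all that niceness requires.

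Your sketch of $\nu\ge\mu\Rightarrow K_{\nu\mu}>0$ goes through as stated. In each row, the $i$'s lying directly above an $i+1$ are the leftmost $i$'s, and the $(i+1)$'s lying directly below an $i$ are the rightmost $(i+1)$'s. So the free entries form a contiguous block $i^a(i+1)^b$. Paired entries come in vertical dominoes, so $p>q$ forces some row with $a\ge1$. Changing the last free $i$ in that row to $i+1$ preserves semistandardness: its right neighbour is $\ge i+1$, the cell above holds an entry $<i$, and the cell below is empty or holds an entry $\ge i+2$.

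One small point: the transfers must be allowed to use $q=0$, padding the content with zeros, so that the number of parts can grow, e.g.\ $(2)\to(1,1)$. This is harmless, since Kostka numbers are defined for weak compositions as content.
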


\begin{theorem}[Li--Li--Yang--Zhang~\cite{LiEtAlStrong}]\label{thm:li-s-strong}
Every Schur positive symmetric function is strongly nice. Consequently, every Schur positive graph is strongly nice, and every strongly nice graph is nice.
\end{theorem}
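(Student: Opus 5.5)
The statement has two parts. Part one says every Schur positive symmetric function is strongly nice. Part two gives two consequences for graphs: Schur positive graphs are strongly nice, and strongly nice graphs are nice. The plan is to prove the symmetric-function statement first and then read off the graph statements.

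Let $f=\sum_\nu c_\nu s_\nu$ with all $c_\nu\ge0$, homogeneous of degree $n$. Expanding in the monomial basis through $s_\nu=\sum_\mu K_{\nu\mu}m_\mu$ gives
\[
 [m_\lambda]f=\sum_{\nu\vdash n}c_\nu K_{\nu\lambda}.
\]
Since the $c_\nu$ are nonnegative, it suffices to prove the following. If $\lambda\ge\mu$ in dominance order, then $K_{\nu\lambda}\le K_{\nu\mu}$ for every $\nu\vdash n$. This is White's monotonicity of Kostka numbers~\cite{White1980}, and I would invoke it directly.

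If a self-contained argument is wanted, I would proceed as follows. By the unit balancing transfers recalled in \cref{sec:prelim}, the relation $\lambda\ge\mu$ decomposes into a chain of single transfers. Each transfer replaces parts $p\ge q+2$ by $p-1,q+1$. Kostka numbers do not depend on the order of the content, so it suffices to compare contents that differ in two adjacent entries $(p,q)$ and $(p-1,q+1)$. Restricting to the two letters involved reduces this to the skew Schur function $s_{\theta}(x_1,x_2)$ for a horizontal-strip-decomposable skew shape $\theta$. That function is symmetric, and its coefficients are unimodal: it is a sum of products of $h$'s, or equivalently the crystal $\mathfrak{sl}_2$-string argument applies. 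Unimodality and symmetry about the center give $[x_1^{p}x_2^{q}]\le[x_1^{p-1}x_2^{q+1}]$ when $p-1\ge q+1$. Summing over fillings of the other letters then yields $K_{\nu\lambda}\le K_{\nu\mu}$. This $\mathfrak{sl}_2$ unimodality step is the only real obstacle, and citing White avoids it.

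For graphs, a Schur positive graph $G$ has $X_G$ Schur positive, so $X_G$ is strongly nice by the first part; this is exactly \cref{def:strong}. Now suppose $G$ is strongly nice, $\lambda\ge\mu$, and $a_\lambda(G)>0$. Then $a_\mu(G)\ge a_\lambda(G)>0$, so $G$ is nice by \cref{def:nice}. This second step uses only that the coefficients are integers compared by $\le$; nonnegativity from \cref{prop:stanley-m} is not even needed.
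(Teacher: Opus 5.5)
Your proposal is correct and follows the route the paper attributes to Li--Li--Yang--Zhang: expand $[m_\lambda]f=\sum_\nu c_\nu K_{\nu\lambda}$ and apply White's monotonicity $K_{\nu\lambda}\le K_{\nu\mu}$ for $\lambda\ge\mu$; the two graph consequences are then immediate. One correction to your optional self-contained sketch: a two-variable skew Schur polynomial is a nonnegative combination of the polynomials $s_{(a,b)}(x_1,x_2)=(x_1x_2)^b h_{a-b}(x_1,x_2)$, not of products of $h$'s (for example, $s_{(1,1)}(x_1,x_2)=x_1x_2$ is not such a combination), but each $s_{(a,b)}$ still has symmetric unimodal coefficients, so your conclusion stands.
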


The proof of \cref{thm:li-s-strong} uses White's~\cite{White1980}
monotonicity theorem for Kostka numbers.

\section{Connected strongly nice but not Schur positive graphs}\label{sec:strong-not-s}

\subsection{The seed graph and the connected bridge family}\label{subsec:Hseed}

Let $H=K_{3,3}-e$, with bipartition
$\{a,x,y\}\sqcup\{b,u,v\}$ and deleted edge $ab$.
Li, Li, Yang, and Zhang~\cite{LiEtAlStrong} showed that $H$ is strongly nice and computed
\begin{equation}\label{eq:H-m}
X_H=720m_{1^6}+168m_{21111}+44m_{2211}+6m_{222}
    +12m_{3111}+6m_{321}+2m_{33},
\end{equation}
and
\begin{equation}\label{eq:H-s}
X_H=152s_{1^6}+52s_{21111}+26s_{2211}-4s_{222}
    +2s_{3111}+4s_{321}+2s_{33}.
\end{equation}
In particular, $[s_{222}]X_H=-4$, so $H$ is not Schur positive.

Let $K_t=\{w,c_2,\ldots,c_t\}$ be disjoint from $H$ and add the single
edge $aw$.  Denote the resulting graph by $F_t$; see \cref{fig:Ft}.

\begin{figure}[htbp]
\centering
\begin{tikzpicture}[scale=.92]
\node[vertex,label=left:$a$] (a) at (0,2) {};
\node[vertex,label=left:$x$] (x) at (0,1) {};
\node[vertex,label=left:$y$] (y) at (0,0) {};
\node[vertex,label=right:$b$] (b) at (2.5,2) {};
\node[vertex,label=right:$u$] (u) at (2.5,1) {};
\node[vertex,label=right:$v$] (v) at (2.5,0) {};
\foreach \R in {u,v}{\draw (a)--(\R);}
\foreach \L in {x,y}{\foreach \R in {b,u,v}{\draw (\L)--(\R);}}
\node[draw,rounded corners,fit=(a)(x)(y)(b)(u)(v),inner sep=7pt] (Hbox) {};
\node[below=3pt of Hbox] {$H=K_{3,3}-e$};
\node[vertex,label=above:$w$] (w) at (5.5,1.8) {};
\node[vertex,label=right:$c_2$] (c2) at (6.8,1.15) {};
\node[vertex,label=right:$c_3$] (c3) at (6.45,0) {};
\node at (5.45,-.12) {$\dots$};
\node[vertex,label=left:$c_t$] (ct) at (4.8,.15) {};
\draw (w)--(c2)--(c3)--(ct)--(w);
\draw (w)--(c3); \draw (c2)--(ct);
\node[draw,rounded corners,fit=(w)(c2)(c3)(ct),inner sep=7pt] (Kbox) {};
\node[below=3pt of Kbox] {$K_t$};
\draw[very thick,bend left=12] (a) to node[pos=.58,above=2pt] {$aw$} (w);
\end{tikzpicture}
\caption{The connected graph $F_t$.}
\label{fig:Ft}
\end{figure}

Because $K_t$ is a clique, each stable block of $F_t$ contains at most
one clique vertex.  Restricting to $H$ therefore gives a stable
partition $\mathcal B$ of $H$.  Conversely, each of
$c_2,\ldots,c_t$ may remain a singleton or enter a distinct block of
$\mathcal B$, while $w$ obeys the same rule except that it cannot enter
the block containing $a$.  Thus every stable partition of $F_t$ is
obtained uniquely from a stable partition of $H$ together with this
restricted partial injection.

The $29$ ordinary stable partitions of $H$ fall into the following
$12$ classes, where $B_*$ denotes the block containing $a$.
\begin{table}[htbp]
\centering
\small
\caption{Stable partitions of $H$ classified by type and $|B_*|$.}
\label{tab:Hclasses}
\begin{tabular}{c c r@{\qquad}c c r}
\toprule
Type & $|B_*|$ & Number & Type & $|B_*|$ & Number\\
\midrule
$(3,3)$&3&1 & $(3,2,1)$&3&3\\
$(3,2,1)$&2&2 & $(3,2,1)$&1&1\\
$(3,1,1,1)$&3&1 & $(3,1,1,1)$&1&1\\
$(2,2,2)$&2&1 & $(2,2,1,1)$&2&8\\
$(2,2,1,1)$&1&3 & $(2,1,1,1,1)$&2&3\\
$(2,1,1,1,1)$&1&4 & $(1^6)$&1&1\\
\bottomrule
\end{tabular}
\end{table}

For a chosen set of $r$ blocks, assigning distinct vertices from
$\{c_2,\ldots,c_t\}$ contributes the falling factorial
$(t-1)_r=(t-1)(t-2)\cdots(t-r)$.  Applying the preceding injection
rule to the $12$ seed classes and then multiplying by the semi-ordering
factor in \eqref{eq:semi-factor} gives the complete list below.  Write
\[
 A_i(t):=[m_{T_i}]X_{F_t}=(t-1)!q_i(t).
\]
For example, the unique seed partition of type $(3,3)$ gives
$A_1(t)=2(t-1)(t-1)!$.

\begin{table}[htbp]
\centering
\scriptsize
\setlength{\tabcolsep}{2.5pt}
\caption{All nonzero monomial types and normalized coefficients of $X_{F_t}$ for $t\ge6$.}
\label{tab:Ft23}
\begin{tabular}{c p{2.7cm} p{3.1cm} c p{2.7cm} p{3.2cm}}
\toprule
& $T_i$ & $q_i(t)$ & & $T_i$ & $q_i(t)$\\
\midrule
$T_1$ & $(4,4,1^{t-2})$ & $2(t-1)$
& $T_{13}$ & $(3,2,2,2,1^{t-3})$ & $6(28t^2-84t+57)$\\
$T_2$ & $(4,3,2,1^{t-3})$ & $6(t-1)$
& $T_{14}$ & $(3,2,2,1^{t-1})$ & $2(3t-2)(14t^2-26t+19)$\\
$T_3$ & $(4,3,1^{t-1})$ & $6t^2-9t+4$
& $T_{15}$ & $(3,2,1^{t+1})$ & $t(t+1)(28t^2-16t+13)$\\
$T_4$ & $(4,2,2,2,1^{t-4})$ & $12(t-1)$
& $T_{16}$ & $(3,1^{t+3})$ & $t(t+1)(t+2)(t+3)(7t-1)$\\
$T_5$ & $(4,2,2,1^{t-2})$ & $2(6t^2-11t+6)$
& $T_{17}$ & $(2,2,2,2,2,2,1^{t-6})$ & $720(t-1)$\\
$T_6$ & $(4,2,1^t)$ & $t(6t^2-4t+1)$
& $T_{18}$ & $(2,2,2,2,2,1^{t-4})$ & $240(t-1)(3t-8)$\\
$T_7$ & $(4,1^{t+2})$ & $t(t+1)(t+2)(2t-1)$
& $T_{19}$ & $(2,2,2,2,1^{t-2})$ & $24(15t^3-57t^2+83t-39)$\\
$T_8$ & $(3,3,3,1^{t-3})$ & $6(t-1)$
& $T_{20}$ & $(2,2,2,1^t)$ & $12t(10t^3-14t^2+19t-5)$\\
$T_9$ & $(3,3,2,2,1^{t-4})$ & $44(t-1)$
& $T_{21}$ & $(2,2,1^{t+2})$ & $2t(t+1)(t+2)(15t^2+8t+12)$\\
$T_{10}$ & $(3,3,2,1^{t-2})$ & $2(22t^2-54t+33)$
& $T_{22}$ & $(2,1^{t+4})$ & $6t(t+1)^2(t+2)(t+3)(t+4)$\\
$T_{11}$ & $(3,3,1^t)$ & $2t(11t^2-13t+7)$
& $T_{23}$ & $(1^{t+6})$ & $t(t+1)(t+2)(t+3)(t+4)(t+5)(t+6)$\\
$T_{12}$ & $(3,2,2,2,2,1^{t-5})$ & $168(t-1)$
& & & \\
\bottomrule
\end{tabular}
\end{table}

The support in \cref{tab:Ft23} is downward closed in dominance order.
Indeed, for $T_1=(4,4,1^{t-2})$ and any $\mu\vdash t+6$,
\[
 \mu\le T_1
 \quad\text{if and only if}\quad
 \mu_1\le4\ \text{ and }\ \ell(\mu)\ge t.
\]
The forward implication follows from the dominance inequalities for $k=1$ and $k=t-1$.  Conversely, if $\mu_1\le4$ and $\ell(\mu)\ge t$, then
for $2\le k\le t$ the first $k$ parts of $\mu$ sum to at most $k+6$,
which is the corresponding partial sum of $T_1$.  Listing these
possibilities gives exactly the $23$ rows of \cref{tab:Ft23}.

\subsection{The strongly nice property of \texorpdfstring{$F_t$}{Ft}}\label{subsec:Ft-strong}

A direct dominance comparison of the $23$ types gives exactly $33$
covering relations; they are listed together with their normalized
coefficient differences in \cref{tab:coverdiff}.  Every listed
difference $q_j(t)-q_i(t)$ for a cover $T_i\gtrdot T_j$ is
nonnegative for $t\ge6$.

\begin{theorem}\label{thm:Fstrong}
For every $t\ge6$, the connected graph $F_t$ is strongly nice.
\end{theorem}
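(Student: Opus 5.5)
The plan is to reduce strong niceness of $F_t$ to finitely many polynomial inequalities in $t$, one for each dominance cover among the nonzero types. First I would dispose of every pair $\lambda\ge\mu$ in which $a_\lambda(F_t)=0$; then the inequality $a_\lambda\le a_\mu$ holds automatically, since all coefficients are nonnegative by \cref{prop:stanley-m}. Next, if $a_\lambda(F_t)>0$, then $\lambda$ is one of the $23$ types $T_i$ in \cref{tab:Ft23}. Because that support is downward closed, as argued above via the characterization $\mu\le T_1$ iff $\mu_1\le4$ and $\ell(\mu)\ge t$, every $\mu\le\lambda$ also lies in the support. It is also worth checking that $T_1$ is the unique maximal element of the support. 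The support is the principal down-set of $T_1$, so a type dominating something in the support but lying outside it would have to have $\mu_1\ge5$ or $\ell(\mu)<t$. Neither can occur for a stable partition of $F_t$: a block has at most $3$ vertices of $H$ plus at most one clique vertex, and the $t$ clique vertices lie in distinct blocks.

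So it suffices to prove $A_i(t)\le A_j(t)$ whenever $T_i\ge T_j$ within the $23$-element support. Since the $A_i(t)=(t-1)!\,q_i(t)$ share the positive factor $(t-1)!$, this is equivalent to $q_i(t)\le q_j(t)$. The dominance order restricted to a downward-closed subset is a finite poset, and any relation $T_i>T_j$ factors through a maximal chain $T_i=\nu_0\gtrdot\nu_1\gtrdot\cdots\gtrdot\nu_s=T_j$. Because the support is downward closed, every $\nu_k$ lies in the support, and each step is a cover in the full dominance lattice, hence one of the $33$ covers listed. By transitivity, it is enough to verify $q_j(t)-q_i(t)\ge0$ for $t\ge6$ on each of the $33$ covers $T_i\gtrdot T_j$.

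For each cover I would expand the difference $q_j-q_i$, a polynomial of degree at most $7$, and show it is nonnegative on $t\ge6$. The cleanest route is to substitute $t=s+6$ and check that all coefficients in $s$ are nonnegative. Where that fails, I would instead factor out the obvious positive linear factors such as $t$, $t+1$ and $t-1$, and bound the remaining low-degree factor by its value at $t=6$ together with the sign of its derivative. Many of the covers are trivial. Examples are $T_1\gtrdot T_2$ with difference $4(t-1)$, and $T_8\gtrdot T_9$ with difference $38(t-1)$. The covers whose lower type has many $1$'s have a higher-degree lower polynomial, so they also dominate easily.

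The main obstacle lies in two places. One is correctly identifying all $33$ covers; a missed cover would break the transitivity argument. I would enumerate the covers systematically using the Brylawski description: each cover is either a unit transfer between adjacent distinct part sizes, or a transfer $p,p\to p+1,p-1$ type move. The other obstacle is the handful of covers where both polynomials have the same degree in $t$, so that positivity depends on leading coefficients and on the threshold $t\ge6$. An example is $T_{12}\gtrdot T_{17}$-type comparisons among the shapes with few $1$'s, such as $T_8$ versus $T_{12}$, or $T_{12}$ versus $T_{17}$, where the values are $168(t-1)$ versus $720(t-1)$. These I would check by hand after the shift $t=s+6$, and they are presumably the reason the hypothesis $t\ge6$ is needed.
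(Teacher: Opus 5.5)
Your proposal is correct and follows the paper's proof essentially step for step. You discard pairs with $a_\lambda(F_t)=0$, use the downward-closed $23$-type support (the down-set of $T_1$) to factor every comparison into covers, and verify the $33$ normalized differences $q_j-q_i\ge0$ via the shift $t=u+6$, as in the appendix. There are three minor slips. First, read downward, the second kind of Brylawski cover replaces $p+1,p-1$ (with a run of $p$'s between them) by $p,p$, not the reverse. Second, $T_8>T_{12}$ is not a cover, since it factors through $T_9$. Third, $t\ge6$ is needed so that the $23$ types and formulas of \cref{tab:Ft23} are valid (e.g.\ $T_{17}=(2^6,1^{t-6})$), not because of same-degree covers like $T_{12}\gtrdot T_{17}$, whose difference $552(t-1)$ is positive for all $t>1$.
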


\begin{proof}
Let $\lambda\ge\mu$ be partitions of $t+6$.  If
$[m_\lambda]X_{F_t}=0$, there is nothing to prove.  Otherwise
$\lambda$ and, by downward closure, $\mu$ occur in \cref{tab:Ft23}.
Any dominance relation between two occurring types is a chain of
covering relations, and \cref{tab:coverdiff} shows that the coefficient
weakly increases along every downward cover.  Hence
$[m_\lambda]X_{F_t}\le[m_\mu]X_{F_t}$.
\end{proof}

\subsection{A negative Schur coefficient}\label{subsec:Ft-nons}

We use the following bridge-transfer formula of Wang, Zhang, and
Zhao~\cite{WangZhangZhao2026}.  If $\ell(\lambda)\le t$, pad $\lambda$
with zeros to length $t$ and write
$\lambda+(1^t)=(\lambda_1+1,\ldots,\lambda_t+1)$.

\begin{theorem}[Wang--Zhang--Zhao~\cite{WangZhangZhao2026}]\label{thm:bridge-transfer}
Let $B_t(G,v)$ be obtained from $G\sqcup K_t$ by choosing
$w\in V(K_t)$ and adding the single edge $vw$.  If
$\lambda\vdash |V(G)|$ and $\ell(\lambda)\le t$, then
\[
 [s_{\lambda+(1^t)}]X_{B_t(G,v)}
 =(t-1)(t-1)!\,[s_\lambda]X_G.
\]
\end{theorem}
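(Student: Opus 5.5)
The plan is to reduce $X_{B_t(G,v)}$ to symmetric functions built from $X_G$ and elementary functions, and then read off the coefficient of $s_{\lambda+(1^t)}$ with the vertical Pieri rule. Write $B=B_t(G,v)$ and $n=|V(G)|$. The proper colorings of $B$ are the proper colorings of $G\sqcup K_t$ in which $v$ and $w$ receive different colors. Take a coloring of $G\sqcup K_t$ in which the color of $v$ occurs on the clique. Since $K_t$ is a clique, that color occurs on exactly one clique vertex. Because all clique vertices are symmetric, the colorings in which this vertex is $w$ contribute exactly $1/t$ of the total weight $Y$ of such colorings. Using multiplicativity on disjoint unions and $X_{K_t}=t!\,e_t$, this gives
\[
 X_B=t!\,X_G\,e_t-\tfrac1t\,Y .
\]

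First I compute $[s_{\lambda+(1^t)}]\bigl(t!\,X_G e_t\bigr)$. Expand $X_G=\sum_\mu c_\mu s_\mu$ and apply the vertical Pieri rule. The shape $\lambda+(1^t)$ has exactly $t$ rows. A vertical strip of size $t$ whose removal leaves a partition of $n$ must therefore take one cell from each of the $t$ rows, so the only contributing $\mu$ is $\mu=\lambda$; this is where $\ell(\lambda)\le t$ is used. Hence this term contributes $t!\,[s_\lambda]X_G$. Comparing with the target $(t-1)(t-1)!=t!-(t-1)!$, it remains to prove
\[
 [s_{\lambda+(1^t)}]\,Y=t!\,[s_\lambda]X_G .
\]

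Next I make $Y$ explicit. Fix a coloring $\kappa$ of $G$ and put $c=\kappa(v)$. The clique colorings using $c$ have weight $t!\,x_c\,e_{t-1}(x\setminus x_c)$. Expanding $e_{t-1}(x\setminus x_c)=\sum_{j\ge0}(-1)^j x_c^{\,j}e_{t-1-j}$ yields
\[
 Y=t!\sum_{j\ge0}(-1)^j X_G^{(j+1)}\,e_{t-1-j},\qquad X_G^{(k)}:=\sum_{\kappa}x^{\kappa}x_{\kappa(v)}^{k}.
\]
Here $X_G^{(k)}$ is symmetric of degree $n+k$: it is the chromatic symmetric function of $G$ with $v$ given weight $k+1$. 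Applying Pieri to the $j$-th term, the strip $\lambda+(1^t)/\mu$ has size $t-1-j$, so $\mu$ is obtained from $\lambda+(1^t)$ by removing one cell from each of $t-1-j$ of its rows. The claim then becomes the single identity
\[
 \sum_{j\ge0}(-1)^j\sum_{\mu}[s_\mu]X_G^{(j+1)}=[s_\lambda]X_G ,
\]
where for each $j$ the inner sum runs over all partitions $\mu$ obtained from $\lambda+(1^t)$ by removing a vertical strip of size $t-1-j$.

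This last identity is the main obstacle. My first attempt is to express $X_G^{(k)}$ through $X_G$ directly, observing that $x_{\kappa(v)}^k$ is the $v$-coordinate part of a power-sum insertion. Concretely, I would compare $X_G^{(k)}$ with $X_G\,p_k$ and with the graph obtained by replacing $v$ by a $(k+1)$-clique-blown-up vertex. I would then use the Murnaghan--Nakayama rule, or equivalently the dual identity $\sum_j(-1)^j h_{j}\,e_{t-1-j}=0$, to telescope the alternating sum so that only the term recovering $s_\lambda$ survives. If this becomes unwieldy, the fallback is to work in the monomial basis. I would combine the stable-partition description (a stable partition of $G$ plus a restricted injection of clique vertices, $w$ avoiding the block of $v$) with the Wang--Wang formula of \cref{thm:intro-WW} for the shape $\lambda+(1^t)$. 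The special rim-hook tabloids of that shape would be matched with those of shape $\lambda$ by stripping the first column, and the excess counted by the factor $(t-1)(t-1)!$.
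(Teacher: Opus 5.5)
Your reduction is correct. You have $X_B=t!\,X_Ge_t-\tfrac1t Y$, the Pieri computation $[s_{\lambda+(1^t)}](t!\,X_Ge_t)=t!\,[s_\lambda]X_G$ is right, and the statement is indeed equivalent to $[s_{\lambda+(1^t)}]Y=t!\,[s_\lambda]X_G$. (The paper does not prove this theorem; it imports it from Wang--Zhang--Zhao.)

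The proposal stops exactly at the decisive step, however. The alternating identity for the vertex-weighted functions $X_G^{(j+1)}$ is never established, and the tools you list do not reach it. $X_Gp_k$ sums $x_c^k$ over all colors $c$, not only $c=\kappa(v)$, so it is not $X_G^{(k)}$. Blowing $v$ up into a $(k+1)$-clique forces $k+1$ distinct colors, whereas $x_{\kappa(v)}^k$ repeats a single color. The rim-hook matching in your fallback is only a hope. So as written there is a genuine gap, and it was created by expanding $e_{t-1}(x\setminus x_c)$ into the alternating sum.

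The missing idea is to count complementarily instead; equivalently, use $x_c\,e_{t-1}(x\setminus x_c)=e_t(x)-e_t(x\setminus x_c)$. A coloring of $G\sqcup K_t$ either repeats $\kappa(v)$ on the clique, contributing the weight $Y$, or it does not. The colorings that do not are exactly the proper colorings of the graph $G'$ obtained from $G\sqcup K_t$ by joining $v$ to every clique vertex. Hence
\[ Y=t!\,X_Ge_t-X_{G'},\qquad X_B=(t-1)(t-1)!\,X_Ge_t+\tfrac1t\,X_{G'}. \]
Now $G'$ contains a clique $K_{t+1}$, so $X_{G'}(x_1,\dots,x_t)=0$: there is no proper coloring with $t$ colors. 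The polynomials $s_\nu(x_1,\dots,x_t)$ with $\ell(\nu)\le t$ are linearly independent, and those with $\ell(\nu)>t$ vanish. Therefore $[s_\nu]X_{G'}=0$ whenever $\ell(\nu)\le t$.

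Equivalently, every stable partition of $G'$ has at least $t+1$ blocks, and each $m_\mu$ lies in the span of the $s_\nu$ with $\nu\le\mu$, which forces $\ell(\nu)\ge\ell(\mu)$. Take $\nu=\lambda+(1^t)$, which has exactly $t$ rows. Then $[s_{\lambda+(1^t)}]Y=t!\,[s_\lambda]X_G$, and combined with your Pieri step this finishes the proof in two lines, with no vertex-weighted functions needed.
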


\begin{theorem}\label{thm:FnonS}
For every $t\ge6$,
\[
 [s_{(3,3,3,1^{t-3})}]X_{F_t}
 =-4(t-1)(t-1)!<0.
\]
Consequently, $F_t$ is connected and strongly nice but not Schur positive.
\end{theorem}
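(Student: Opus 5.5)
The plan is to apply \cref{thm:bridge-transfer} directly with $G=H$, $v=a$, so that $B_t(H,a)=F_t$ by construction. I take $\lambda=(2,2,2)\vdash 6$. Since $\ell(\lambda)=3\le t$ for every $t\ge6$ (indeed for $t\ge3$), we may pad $\lambda$ with zeros to length $t$. Then
\[
\lambda+(1^t)=(3,3,3,1^{t-3}),
\]
which is a partition of $t+6=|V(F_t)|$. The theorem then gives
\[
[s_{(3,3,3,1^{t-3})}]X_{F_t}=(t-1)(t-1)!\,[s_{222}]X_H.
\]

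The remaining input is $[s_{222}]X_H=-4$, which is read off from the Schur expansion \eqref{eq:H-s}. If one wanted independence from that computation, it could be rechecked from \eqref{eq:H-m} by inverting the Kostka matrix, or via \cref{thm:intro-WW}. Substituting gives $-4(t-1)(t-1)!$, which is strictly negative because $t\ge6>1$.

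For the consequences: $F_t$ is connected, because $H$ is connected, $K_t$ is connected, and the bridge $aw$ joins them. Strong niceness is \cref{thm:Fstrong}. The negative coefficient shows that $F_t$ is not Schur positive.

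There is no real obstacle here. The only points needing care are that the hypothesis $\ell(\lambda)\le t$ of \cref{thm:bridge-transfer} holds, and that the vertex $v$ in the transfer formula may be any vertex, so that choosing $v=a$ (an endpoint of the deleted edge) is legitimate. Both are immediate from the statement of \cref{thm:bridge-transfer}, so the proof is essentially a one-line substitution.
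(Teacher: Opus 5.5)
Your proposal is correct and is the paper's own proof. You identify $F_t=B_t(H,a)$, apply \cref{thm:bridge-transfer} with $\lambda=(2,2,2)$ using $(2,2,2)+(1^t)=(3,3,3,1^{t-3})$ and $[s_{222}]X_H=-4$, and take strong niceness from \cref{thm:Fstrong}; your added checks of $\ell(\lambda)\le t$ and of connectedness through the bridge $aw$ are fine.
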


\begin{proof}
Since $F_t=B_t(H,a)$ and
$(2,2,2)+(1^t)=(3,3,3,1^{t-3})$, \cref{thm:bridge-transfer} and
$[s_{222}]X_H=-4$ give the displayed coefficient.  The strongly nice
property follows from \cref{thm:Fstrong}.
\end{proof}

\section{Disconnected strongly nice but not Schur positive graphs}\label{sec:disconnected-strong-not-s}

We next study disjoint unions.  The key point is that the strongly nice
property is closed under multiplication for the class of symmetric
functions relevant here.

\subsection{A multiplicative closure property}

\begin{theorem}\label{thm:product}
Let $f$ and $g$ be homogeneous symmetric functions with nonnegative
monomial coefficients.  If both are strongly nice, then $fg$ is
strongly nice.  Consequently, the disjoint union of two strongly nice
graphs is strongly nice.
\end{theorem}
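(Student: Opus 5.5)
The plan is to reduce to a single balancing transfer and to a statement about two coordinates only. Write $f=\sum_\lambda a_\lambda m_\lambda$ and $g=\sum_\lambda b_\lambda m_\lambda$ with $a,b\ge0$ order-reversing in dominance order. Extend both to all weak compositions by sorting: $a(\gamma):=a_{\operatorname{sort}(\gamma)}$. Extracting the coefficient of $x^\nu$ in $fg$ gives
\[
 [m_\nu](fg)=\sum_{\gamma+\delta=\nu} a(\gamma)\,b(\delta),
\]
where $\gamma,\delta$ range over weak compositions with $\gamma+\delta=\nu$ coordinatewise. By the remark in \cref{sec:prelim}, every dominance relation is a chain of unit balancing transfers. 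So it suffices to treat $\nu'$ obtained from $\nu$ by replacing parts $\nu_i=p$, $\nu_j=q$ with $p\ge q+2$ by $p-1$ and $q+1$; we may keep the positions $i,j$ unsorted, since $a$ and $b$ are symmetric. The graph statement then follows from $X_{G_1\sqcup G_2}=X_{G_1}X_{G_2}$ and \cref{prop:stanley-m}.

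Next I fiber the sum. I fix all coordinates of $\gamma$ and $\delta$ outside $\{i,j\}$, and also fix $c=\gamma_i+\gamma_j$. Both $\nu$ and $\nu'$ have the same multiset of remaining coordinates and the same total $p+q$ at $\{i,j\}$, so these fibers correspond bijectively. It therefore suffices to compare, for each fiber, the sums
\[
 S_\nu=\sum_s A(s,c-s)\,B(p-s,\,q-c+s),\qquad
 S_{\nu'}=\sum_s A(s,c-s)\,B(p-1-s,\,q+1-c+s),
\]
where $A(s,u)$ and $B(s,u)$ are $a$ and $b$ with the other coordinates frozen. The functions $A$ and $B$ are symmetric in their two arguments. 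On each line $s+u=\text{const}$ they are nondecreasing as $|s-u|$ decreases, because such moves are balancing transfers or swaps.

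To handle this two-variable problem I use a layer-cake decomposition. Write $A=\sum_{k\ge1}\mathbf 1[A\ge k]$, and similarly for $B$. On the line $s+u=c$ the set $\{A\ge k\}$ is, by the monotonicity just noted, a set of the form $\{\,|2s-c|\le\alpha\,\}$, centred at $s=c/2$. Likewise $\{B(p-s,q-c+s)\ge l\}$ is a set $\{\,|2s-(p-q+c)|\le\beta\,\}$, centred at $s=(p-q+c)/2$. For $\nu'$ the second set becomes $\{\,|2s-(p-q-2+c)|\le\beta\,\}$: it has the same half-width, and its centre has moved by exactly $1$ towards $c/2$, because $p-q\ge2$. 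Thus each term of $S_\nu$ counts the lattice points $s$ in the intersection of two intervals $[x-\alpha',x+\alpha']$ and $[y-\beta',y+\beta']$. Passing to $\nu'$ decreases $|x-y|$ by $1$ without crossing, since $|x-y|=(p-q)/2\ge1$.

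The main obstacle is checking carefully that the integer-point count of such an intersection weakly increases under this shift. Parity is the subtle point, since the centres are half-integers and lattice points are integers. The admissibility ranges of $s$ (all coordinates $\ge0$) must also be absorbed, which I would do by declaring $A=B=0$ off the nonnegative region. That region is itself a centred interval of the same form, so it is absorbed into the layer sets. I expect this to reduce to the elementary fact that, for integer intervals $I$ and $J$ with $J$ translated by one step towards the centre of $I$, $|I\cap J|$ does not decrease. This holds because either $J\subseteq I$-side containment is preserved, or exactly one endpoint of $J$ leaves while one enters $I$, or only entries occur.

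Summing over $k$, $l$, all fibers, and then along the chain of transfers yields $[m_\nu](fg)\le[m_{\nu'}](fg)$. Hence $fg$ is strongly nice.
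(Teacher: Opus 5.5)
Your proposal is correct and follows essentially the paper's route: you reduce to one unit balancing transfer, fiber the coefficient of $fg$ by freezing the coordinates outside $\{i,j\}$, and split each symmetric unimodal slice into centred interval indicators, so your count of lattice points in the intersection of two centred intervals whose centres move one step closer is exactly the convolution-of-interval-indicators step the paper invokes. One small repair is needed: $A=\sum_{k\ge1}\mathbf 1[A\ge k]$ requires integer values, while $f,g$ may have coefficients in $\mathbb Q$, so either clear denominators first (which preserves nonnegativity and strong niceness) or use the weighted layer-cake $A=\sum_k(t_k-t_{k-1})\mathbf 1[A\ge t_k]$ over the distinct values $0=t_0<t_1<\cdots$ of $A$.
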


\begin{proof}
For a weak composition $\alpha$ in sufficiently many variables, write
$F(\alpha)=[x^\alpha]f$ and $G(\alpha)=[x^\alpha]g$.  Symmetry and the
strongly nice property imply that these coefficients do not decrease
under a unit balancing transfer.  If $h=fg$ and
$H(\gamma)=[x^\gamma]h$, then
\[
 H(\gamma)=\sum_{\alpha+\beta=\gamma}F(\alpha)G(\beta).
\]
Fix all splittings outside two coordinates with
$\gamma_i=p$, $\gamma_j=q$, and $p\ge q+2$.  Homogeneity fixes
$s=\alpha_i+\alpha_j$.  As $r=\alpha_i$ varies, the corresponding
values of $F$ form a nonnegative symmetric unimodal sequence centered
at $s/2$; the analogous values of $G$ form such a sequence centered at
$(p+q-s)/2$.  Their convolution is again symmetric unimodal (for
example, decompose each sequence into nonnegative combinations of
centered interval indicators).  Hence the contribution cannot decrease
when $(p,q)$ is replaced by $(p-1,q+1)$.  Summing over the fixed outside
splittings gives the same inequality for $H$.  Since every dominance
relation is generated by unit balancing transfers, $fg$ is strongly
nice.

For graphs, use $X_{G_1\sqcup G_2}=X_{G_1}X_{G_2}$ and the
nonnegativity of the monomial coefficients from
\cref{prop:stanley-m}.
\end{proof}

\subsection{Disjoint union with a complete graph}

Write $X_G=\sum_\lambda c_\lambda(G)s_\lambda$.  The
Littlewood--Richardson rule shows that the disjoint union of two Schur
positive graphs is Schur positive.  If one factor has a negative Schur
coefficient, however, the product may still be Schur positive because
different contributions can cancel.

\begin{proposition}\label{prop:Kt-pieri}
For every graph $G$, every $t\ge1$, and every partition $\nu$,
\[
 [s_\nu]X_{G\sqcup K_t}
 =t!\sum_{\substack{\lambda\\
 \nu/\lambda\text{ is a vertical }t\text{-strip}}}c_\lambda(G).
\]
Thus $G\sqcup K_t$ is Schur positive exactly when all these sums are
nonnegative.  In particular, if a negative coefficient
$c_{\lambda_0}(G)$ is the unique nonzero term contributing to one such
sum, then $G\sqcup K_t$ is not Schur positive.
\end{proposition}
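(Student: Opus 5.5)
The plan is to compute $X_{K_t}$ directly, multiply by $X_G$ using $X_{G\sqcup K_t}=X_GX_{K_t}$, and read off Schur coefficients via the vertical Pieri rule. A proper coloring of $K_t$ uses $t$ distinct colors. Every $t$-element set of colors arises from exactly $t!$ bijections onto the vertices, so
\[
 X_{K_t}=t!\,m_{1^t}=t!\,e_t .
\]
Then, by the multiplicativity recorded in \cref{sec:prelim},
\[
 X_{G\sqcup K_t}=X_G\,X_{K_t}=t!\sum_\lambda c_\lambda(G)\,s_\lambda e_t .
\]

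Next, apply the vertical Pieri rule to each product $s_\lambda e_t$. This gives
\[
 s_\lambda e_t=\sum_{\nu:\ \nu/\lambda\text{ vertical }t\text{-strip}} s_\nu .
\]
Interchanging the two finite sums and extracting $[s_\nu]$ yields the displayed formula: each $\lambda$ contributes $t!\,c_\lambda(G)$ exactly when $\nu/\lambda$ is a vertical $t$-strip, and contributes nothing otherwise. Here $\nu\vdash |V(G)|+t$, and the sum effectively runs over $\lambda\vdash|V(G)|$, since $c_\lambda(G)=0$ for other sizes.

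The two consequences are immediate.
\begin{itemize}
 \item Schur positivity of $G\sqcup K_t$ means exactly that every $[s_\nu]X_{G\sqcup K_t}$ is nonnegative. Since $t!>0$, this is equivalent to nonnegativity of every such sum.
 \item Suppose some $\nu$ has the property that, among the $\lambda$ with $\nu/\lambda$ a vertical $t$-strip, only $\lambda_0$ has $c_\lambda(G)\neq0$, and $c_{\lambda_0}(G)<0$. Then $[s_\nu]X_{G\sqcup K_t}=t!\,c_{\lambda_0}(G)<0$, so $G\sqcup K_t$ is not Schur positive.
\end{itemize}

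There is no real obstacle. The only point needing care is the identity $X_{K_t}=t!\,e_t$, that is, confirming that $m_{1^t}=e_t$ and that the multiplicity $t!$ is correct.
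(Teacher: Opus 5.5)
Your proposal is correct and follows the paper's proof: write $X_{G\sqcup K_t}=X_G\cdot t!\,e_t$ and read off Schur coefficients with the vertical Pieri rule. The two consequences then follow because $t!>0$, exactly as you argue.
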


\begin{proof}
Since $X_{K_t}=t!e_t$, the formula is the vertical Pieri rule applied
to $t!X_Ge_t$.
\end{proof}

Recall $H=K_{3,3}-e$ and define
\[
 M_t:=H\sqcup K_t,\qquad t\ge1.
\]

\begin{figure}[htbp]
\centering
\begin{tikzpicture}[scale=0.95]
% H = K_{3,3}-e
\node at (1.2,2.1) {$H=K_{3,3}-e$};
\foreach \name/\x/\y in {a/0/1.2,b/1.2/1.2,c/2.4/1.2,d/0/0,e/1.2/0,f/2.4/0}
  \node[vertex,label={[font=\scriptsize]90:\name}] (\name) at (\x,\y) {};
\foreach \u/\v in {a/e,a/f,b/d,b/e,b/f,c/d,c/e,c/f}
  \draw[edge] (\u)--(\v);
\node at (4.0,0.6) {$\sqcup$};
% K_t schematic
\node at (7.2,2.1) {$K_t$};
\foreach \name/\x/\y in {u1/5.8/1.05,u2/6.8/1.65,u3/7.9/1.2,u4/7.2/0.15}
  \node[vertex] (\name) at (\x,\y) {};
\foreach \u/\v in {u1/u2,u1/u3,u1/u4,u2/u3,u2/u4,u3/u4}
  \draw[edge] (\u)--(\v);
\node at (8.55,1.0) {$\cdots$};
\end{tikzpicture}
\caption{The disconnected family $M_t=H\sqcup K_t$.}
\label{fig:Mt-family}
\end{figure}

\begin{theorem}\label{thm:Mt}
Every $M_t$ is disconnected and strongly nice.  Moreover, $M_t$ is
Schur positive if and only if $t=1$ or $t=2$.  See Figure~\ref{fig:Mt-family}.  For every $t\ge3$,
\[
 [s_{(3,3,3,1^{t-3})}]X_{M_t}=-4t!<0.
\]
\end{theorem}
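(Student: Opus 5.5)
Disconnectedness is immediate, since no edge joins $H$ to $K_t$. For the strongly nice property, $H$ is strongly nice by Li--Li--Yang--Zhang, and $K_t$ is strongly nice because $X_{K_t}=t!e_t=t!\,m_{1^t}$ has a single nonzero monomial coefficient at the unique minimal partition $(1^t)$; alternatively $t!e_t$ is Schur positive, so \cref{thm:li-s-strong} applies. Then \cref{thm:product} gives that $M_t=H\sqcup K_t$ is strongly nice.

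For Schur positivity I will use \cref{prop:Kt-pieri} with the Schur expansion \eqref{eq:H-s}. The support of $X_H$ is
\[
\{(1^6),(2,1^4),(2,2,1,1),(2,2,2),(3,1^3),(3,2,1),(3,3)\}.
\]
For $t\ge3$ and $\nu=(3,3,3,1^{t-3})$, I need every $\lambda\vdash6$ such that $\nu/\lambda$ is a vertical $t$-strip. Each row of $\nu$ loses at most one cell, and $\nu$ has exactly $t$ rows. So $\lambda$ is obtained by deleting exactly one cell from every row, which forces $\lambda=(2,2,2)$. Hence $[s_\nu]X_{M_t}=t!\,c_{222}(H)=-4t!$, which proves the displayed formula and shows non-positivity for $t\ge3$.

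For $t=1,2$ I must check that every Pieri sum is nonnegative. The only negative coefficient is $c_{222}=-4$, so it suffices to check the shapes $\nu$ with $\nu/(2,2,2)$ a vertical strip of size $t$.
\begin{itemize}
\item For $t=1$ these are $\nu=(3,2,2)$ and $(2,2,2,1)$.
 \begin{itemize}
 \item $\nu=(3,2,2)$ also arises from $\lambda=(3,2,1)$, giving $-4+4=0$.
 \item $\nu=(2,2,2,1)$ also arises from $\lambda=(2,2,1,1)$, giving $-4+26>0$.
 \end{itemize}
\item For $t=2$ these are $\nu=(3,3,2)$, $(3,2,2,1)$ and $(2,2,2,1,1)$.
 \begin{itemize}
 \item $\nu=(3,3,2)$: the other contributors are $\lambda$ with $\nu/\lambda$ a vertical $2$-strip, namely $(3,2,1)$ and $(2,2,2)$. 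The sum is $4-4=0$, with $(3,3)$ excluded since $(3,3,2)/(3,3)$ is a horizontal pair.
 \item $\nu=(3,2,2,1)$: the contributors include $(3,2,1)$, $(2,2,1,1)$ and $(3,1,1,1)$, so the sum is comfortably positive.
 \item $\nu=(2,2,2,1,1)$: the contributors include $(2,2,1,1)$ with coefficient $26$, so the sum is positive.
 \end{itemize}
\end{itemize}
All other sums involve only nonnegative $c_\lambda$. Hence $M_1$ and $M_2$ are Schur positive.

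The main obstacle is the careful enumeration of vertical-strip removals in the $t=1,2$ cases. Near-cancellations such as $-4+4=0$ leave no slack, so each contributing $\lambda$ has to be listed exactly. The $t\ge3$ case and the strongly nice claim are essentially immediate from the cited results.
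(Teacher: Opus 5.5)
Your proof is correct and follows essentially the paper's route. Like the paper, you get the strongly nice claim from \cref{thm:product}, and you apply \cref{prop:Kt-pieri} to \eqref{eq:H-s}, with the shape $(3,3,3,1^{t-3})$ isolating $c_{222}(H)=-4$ for $t\ge3$. The only difference is cosmetic: the paper writes out the full Schur expansions of $X_{M_1}$ and $\tfrac12X_{M_2}$, whereas you check only the shapes to which $c_{222}(H)$ contributes, and your cancellations for $s_{322}$ and $s_{332}$ match the paper's expansions.
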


\begin{proof}
The graph $K_t$ is Schur positive and hence strongly nice, while $H$ is strongly nice by \cref{subsec:Hseed}.  Therefore \cref{thm:product} shows that every $M_t$ is strongly nice.  For $t=1,2$, applying Pieri to \eqref{eq:H-s} gives
\[
\begin{aligned}
X_{M_1}={}&2s_{43}+4s_{421}+2s_{41^3}+6s_{331}+32s_{321^2}
 +54s_{31^4}+22s_{2^31}\\
&+78s_{2^21^3}+204s_{21^5}+152s_{1^7},
\end{aligned}
\]
and
\[
\begin{aligned}
\tfrac12X_{M_2}={}&2s_{44}+6s_{431}+4s_{422}+6s_{421^2}+2s_{41^4}
 +32s_{331^2}+28s_{32^21}+84s_{321^3}\\
&+54s_{31^5}+26s_{2^4}+74s_{2^31^2}+230s_{2^21^4}
 +204s_{21^6}+152s_{1^8},
\end{aligned}
\]
so both are Schur positive.

For $t\ge3$, put $\nu=(3,3,3,1^{t-3})$.  This partition has exactly
$t$ rows.  If $\nu/\lambda$ is a vertical $t$-strip with
$\lambda\vdash6$, one cell must be removed from every row, so
$\lambda=(2,2,2)$.  By \cref{prop:Kt-pieri} and
$[s_{222}]X_H=-4$,
\[
 [s_\nu]X_{M_t}=t![s_{222}]X_H=-4t!.
\]
\end{proof}

Thus a non-Schur-positive component does not by itself determine the
Schur positivity of a disjoint union: $H$ is not Schur positive,
whereas $H\sqcup K_1$ and $H\sqcup K_2$ are Schur positive; for
$t\ge3$, the graphs $M_t$ give the desired disconnected separating
family.

\section{Connected nice but not strongly nice graphs}\label{sec:nice-not-strong}

\subsection{The family \texorpdfstring{$N_r$}{Nr}}

For vertex-disjoint graphs $G$ and $H$, let $G\vee H$ denote their
join, obtained from $G\sqcup H$ by adding every edge between the two
vertex sets.  For $r\ge2$, define
\begin{equation}\label{eq:Nr-def}
 N_r:=K_r\vee(K_2\sqcup2K_1).
\end{equation}

\begin{figure}[htbp]
\centering
\begin{tikzpicture}[scale=1]
\node[vertex,label=left:$u_1$] (u1) at (0,1.35) {};
\node[vertex,label=left:$u_2$] (u2) at (-.8,0) {};
\node[vertex,label=left:$u_3$] (u3) at (.8,0) {};
\draw (u1)--(u2)--(u3)--(u1);
\node[vertex,label=right:$a$] (a) at (4,1.55) {};
\node[vertex,label=right:$b$] (b) at (4,.55) {};
\node[vertex,label=below:$z_1$] (z1) at (3.45,-.65) {};
\node[vertex,label=below:$z_2$] (z2) at (4.55,-.65) {};
\draw (a)--(b);
\foreach \u in {u1,u2,u3}{\foreach \v in {a,b,z1,z2}{\draw[gray!55,thin] (\u)--(\v);}}
\node[draw,rounded corners,fit=(u1)(u2)(u3),inner sep=7pt,label=above:$K_3$] {};
\node[draw,rounded corners,fit=(a)(b)(z1)(z2),inner sep=8pt,label=above:$K_2\sqcup2K_1$] {};
\end{tikzpicture}
\caption{The graph $N_3$, illustrating the construction
$N_r=K_r\vee(K_2\sqcup2K_1)$.}
\label{fig:Nr-family}
\end{figure}

Figure~\ref{fig:Nr-family} illustrates the construction for $r=3$. Every vertex of $K_r$ is universal in $N_r$ and hence is a singleton
block in every stable partition.  If $Q=K_2\sqcup2K_1$, its ordinary
stable partitions have types $(3,1),(2,2),(2,1,1),(1^4)$ with counts
$2,2,5,1$, respectively.  Using \eqref{eq:semi-factor}, we obtain
\begin{equation}\label{eq:Nr-m}
\begin{aligned}
X_{N_r}={}&2(r+1)!\,m_{(3,1^{r+1})}
+4r!\,m_{(2,2,1^r)}\\
&+5(r+2)!\,m_{(2,1^{r+2})}
+(r+4)!\,m_{(1^{r+4})}.
\end{aligned}
\end{equation}

\begin{theorem}\label{thm:Nr}
For every $r\ge2$, the graph $N_r$ is connected and nice but not
strongly nice.
\end{theorem}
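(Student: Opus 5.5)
The argument will work directly from the monomial expansion \eqref{eq:Nr-m}, which lists exactly four nonzero coefficients. Their support is
\[
\lambda^{(1)}=(3,1^{r+1}),\quad \lambda^{(2)}=(2,2,1^r),\quad \lambda^{(3)}=(2,1^{r+2}),\quad \lambda^{(4)}=(1^{r+4}).
\]
Connectivity is immediate: every vertex of $K_r$ is universal and $r\ge2\ge1$, so any two vertices are joined through a vertex of $K_r$.

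For niceness, I will show that the support is downward closed in dominance order. First I describe all partitions $\mu\vdash r+4$ lying below each support element, using the same kind of partial-sum argument as for $T_1$ in \cref{sec:strong-not-s}. The only nontrivial case is $\mu\le(3,1^{r+1})$. This forces $\mu_1\le3$, and the partial sum at $k=2$ gives $\mu_1+\mu_2\le4$. So either $\mu_1=3$ and $\mu_2\le1$, giving $\mu=(3,1^{r+1})$, or $\mu_1\le2$ and $\mu_2\le 2$. In the second case $\mu_1+\mu_2+\mu_3\le 5$ forces $\mu_3\le1$. Hence $\mu\in\{(2,2,1^r),(2,1^{r+2}),(1^{r+4})\}$, all of which are in the support. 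The other three support elements are handled even more easily, since the set of partitions with $\mu_1\le2$ and at most two parts equal to $2$ consists exactly of $\lambda^{(2)},\lambda^{(3)},\lambda^{(4)}$. Any $\lambda$ with $a_\lambda(N_r)>0$ is one of the four, so every $\mu\le\lambda$ also has $a_\mu(N_r)>0$, and $N_r$ is nice.

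To show $N_r$ is not strongly nice, I will compare $(3,1^{r+1})\ge(2,2,1^r)$; this relation holds because the partial sums are $3\ge2$, $4\ge4$, and equal thereafter. The corresponding coefficients are $a_{(3,1^{r+1})}=2(r+1)!$ and $a_{(2,2,1^r)}=4r!$. Since $2(r+1)!>4r!$ exactly when $r+1>2$, that is, for $r\ge2$, this pair violates the strongly nice inequality.

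The only step requiring real care is checking \eqref{eq:Nr-m} itself, in particular the semi-ordering factors. A stable partition of type $(3,1^{r+1})$ has $r+1$ singletons, giving the factor $(r+1)!$. Type $(2,2,1^r)$ has two blocks of size $2$ and $r$ singletons, giving $2!\,r!$. Multiplying by the counts $2$ and $2$ of stable partitions of $Q=K_2\sqcup2K_1$ yields $2(r+1)!$ and $4r!$. I expect the main obstacle to be this bookkeeping rather than any conceptual step, because the comparison that breaks strong niceness is close to tight: it fails for $r=1$.
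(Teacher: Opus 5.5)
Your proposal is correct and follows essentially the same route as the paper: you use the expansion \eqref{eq:Nr-m}, show that its support is exactly the set of partitions below $(3,1^{r+1})$, and exhibit the reversed inequality $2(r+1)!>4r!$ on the cover $(3,1^{r+1})\gtrdot(2,2,1^r)$. You also supply the partial-sum verification of downward closure, which the paper only asserts; connectivity via the universal vertices of $K_r$ is likewise the paper's argument.
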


\begin{proof}
The four types in \eqref{eq:Nr-m} are exactly the partitions below
$(3,1^{r+1})$ and form the dominance chain
\[
 (3,1^{r+1})>(2,2,1^r)>(2,1^{r+2})>(1^{r+4}),
\]
so $N_r$ is nice.  On the first cover,
\[
 2(r+1)!-4r!=2(r-1)r!>0
\]
for $r\ge2$, which reverses the inequality required by the strongly
nice property.  Connectedness is immediate from the join construction.
\end{proof}

\subsection{The level-\texorpdfstring{$k$}{k} nice property}

The nice property records only whether a monomial coefficient is
positive, while the strongly nice property compares its full value.
This suggests the following finite-level refinement.

\begin{definition}\label{def:levelk}
Let $k\ge1$. A graph $G$ is \emph{level-$k$ nice} if, for all
$\lambda,\mu\vdash |V(G)|$ with $\lambda\ge\mu$ and every integer
$1\le j\le k$, the inequality $a_\lambda(G)\ge j$ implies
$a_\mu(G)\ge j$.
\end{definition}

Equivalently,
\begin{equation}\label{eq:level-trunc}
 \min\{a_\lambda(G),k\}\le\min\{a_\mu(G),k\}
 \qquad(\lambda\ge\mu).
\end{equation}

\begin{proposition}\label{prop:level-hierarchy}
The level-$1$ nice property is the nice property, and every level-$(k+1)$ nice
graph is level-$k$ nice.  A graph is strongly nice if and only if it
is level-$k$ nice for every $k\ge1$.
\end{proposition}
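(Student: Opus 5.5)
The plan is to work directly from \cref{def:levelk}, using the monomial coefficients $a_\lambda(G)$, which are nonnegative integers by \cref{prop:stanley-m}. The proposition has three parts, and each is a short unwinding of definitions. No step looks like a real obstacle; the only care needed is in the last part, where integrality is what links the threshold conditions to actual inequalities.

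For level $1$: since each $a_\lambda(G)$ is a nonnegative integer, the condition $a_\lambda(G)\ge1$ is the same as $a_\lambda(G)>0$. So the level-$1$ condition reads: $a_\lambda(G)>0$ implies $a_\mu(G)>0$ whenever $\lambda\ge\mu$. This is exactly \cref{def:nice}.

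Monotonicity in $k$ is immediate. The level-$(k+1)$ condition requires the implication for every $1\le j\le k+1$, and the level-$k$ condition asks for it only for $1\le j\le k$, a subset of those $j$.

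For the characterization of strongly nice, I argue both directions.

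If $G$ is strongly nice and $\lambda\ge\mu$, then $a_\lambda(G)\le a_\mu(G)$. Hence $a_\lambda(G)\ge j$ gives $a_\mu(G)\ge j$ for every $j$, so $G$ is level-$k$ nice for all $k$.

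Conversely, suppose $G$ is level-$k$ nice for every $k$, and fix $\lambda\ge\mu$. If $a_\lambda(G)=0$, then $a_\lambda(G)\le a_\mu(G)$ holds trivially, since $a_\mu(G)\ge0$. Otherwise set $j=k=a_\lambda(G)\ge1$. Level-$k$ niceness applied with this $j$ gives $a_\mu(G)\ge a_\lambda(G)$. This is the point where integrality is used: it lets us take $j$ equal to $a_\lambda(G)$ itself.

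Alternatively, one can read off the same conclusion from the truncated form \eqref{eq:level-trunc}. For a fixed graph only finitely many partitions of $|V(G)|$ occur, so we may take $k\ge\max_\lambda a_\lambda(G)$. For such $k$ the truncations in \eqref{eq:level-trunc} do nothing, and the inequality becomes $a_\lambda(G)\le a_\mu(G)$. Either route gives $G$ strongly nice.
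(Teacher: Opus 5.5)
Your proof is correct and matches the paper's argument. The first two parts follow by unwinding the definition, and the converse takes $k=j=a_\lambda(G)$ when this coefficient is positive, with the case $a_\lambda(G)=0$ trivial; your alternative via a single $k\ge\max_\lambda a_\lambda(G)$ in the truncated inequality is also valid.
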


\begin{proof}
The first two statements follow directly from the definition.  A
strongly nice graph satisfies \eqref{eq:level-trunc} for every $k$.
Conversely, if $G$ is level-$k$ nice for every $k$ and
$\lambda\ge\mu$, take $k=a_\lambda(G)$ when this coefficient is
positive; then the level-$k$ condition gives
$a_\mu(G)\ge a_\lambda(G)$.  The case $a_\lambda(G)=0$ is automatic.
\end{proof}

\begin{definition}\label{def:depth}
The \emph{level depth} of $G$ is
\[
 d(G):=\sup\bigl(\{k\in\N:G\text{ is level-}k\text{ nice}\}\cup\{0\}\bigr),
\]
with $d(G)=\infty$ when the set is unbounded.  Thus a graph that is not
nice has depth $0$, and a graph is strongly nice exactly when its depth
is infinite.
\end{definition}

\begin{theorem}\label{thm:Nr-depth}
For every $r\ge2$,
\[
 d(N_r)=4r!.
\]
Hence connected nice graphs that are not strongly nice can have
arbitrarily large finite level depth.
\end{theorem}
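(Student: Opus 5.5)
We use the explicit monomial expansion \eqref{eq:Nr-m} together with the reformulation \eqref{eq:level-trunc}. The support of $X_{N_r}$ is the four-element chain
\[
 \lambda^{(1)}=(3,1^{r+1})>\lambda^{(2)}=(2,2,1^r)>\lambda^{(3)}=(2,1^{r+2})>\lambda^{(4)}=(1^{r+4}),
\]
with coefficients $A_1=2(r+1)!$, $A_2=4r!$, $A_3=5(r+2)!$, $A_4=(r+4)!$. This support is downward closed: a partition of $r+4$ lies below $(3,1^{r+1})$ exactly when it is one of these four. Every other partition of $r+4$ has coefficient $0$.

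First we settle the pairs $\lambda\ge\mu$ that do not involve the support. If $a_\lambda(N_r)=0$, then $\min\{a_\lambda,k\}=0$ and \eqref{eq:level-trunc} holds trivially. If $a_\lambda>0$, then $\lambda$ is in the chain, and by downward closure so is $\mu$. Hence level-$k$ niceness reduces to requiring $\min\{A_i,k\}\le\min\{A_j,k\}$ for all $i<j$.

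Next we compare the coefficients, using $r\ge2$. We have $A_3=5(r+2)(r+1)r!$, which exceeds $A_1=2(r+1)r!$. We also have $A_4=(r+4)(r+3)(r+2)(r+1)r!$, which exceeds $A_3$ because $(r+4)(r+3)>5$. Both $A_3$ and $A_4$ exceed $A_1>A_2=4r!$. So the only violating pairs are $(i,j)=(1,2)$, where $A_1>A_2$. In every other pair $i<j$ we have $A_i\le A_j$, so the truncated inequality holds for every $k$.

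For the pair $(1,2)$, the condition $\min\{A_1,k\}\le\min\{A_2,k\}$ holds exactly when $k\le A_2$. Indeed, if $k\le A_2<A_1$, both sides equal $k$. If $k>A_2$, the right side is $A_2$, while the left side is $\min\{A_1,k\}>A_2$. Thus $N_r$ is level-$k$ nice precisely for $k\le 4r!$, and the supremum in \cref{def:depth} is $4r!$. Since $4r!\to\infty$ and each $N_r$ is connected and not strongly nice by \cref{thm:Nr}, the final sentence of the theorem follows.

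There is no serious obstacle here. The only points needing care are the downward-closure claim, which was already used in the proof of \cref{thm:Nr}, and checking the coefficient inequalities $A_2<A_3<A_4$ and $A_1<A_3$ uniformly for $r\ge2$.
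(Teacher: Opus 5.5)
Your proposal is correct and follows essentially the same argument as the paper: restrict to the four-element downward-closed chain in \eqref{eq:Nr-m}, observe that the only reversed comparison is $2(r+1)!>4r!$ on the first cover, and note that the truncated inequality for that pair holds exactly when $k\le 4r!$. You are more explicit than the paper in checking the remaining pairs $A_1<A_3$, $A_2<A_3<A_4$ and in showing that the condition fails for every $k>4r!$, not just $k=4r!+1$, but the route is the same.
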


\begin{proof}
In \eqref{eq:Nr-m}, the only reversed comparison is
$2(r+1)!>4r!$ on the first cover; all remaining downward comparisons
are in the required direction.  For $k\le4r!$, truncation makes the
first two coefficients equal whenever needed, so $N_r$ is level-$k$
nice.  At $k=4r!+1$ the first cover gives
\[
 \min\{2(r+1)!,k\}=4r!+1>4r!=\min\{4r!,k\},
\]
so $d(N_r)=4r!$.
\end{proof}

The first finite level is already strict.  For
$A=K_{1,4}\sqcup2K_1$, every partition of $7$ except $(7)$ occurs as a
stable-partition type, so $A$ is nice, but
$(5,2)>(4,3)$ and
$a_{(5,2)}(A)=2>a_{(4,3)}(A)=1$; hence $A$ is not level-$2$ nice.
On the other hand, $N_2$ is level-$8$ nice but not strongly nice.
Thus the level-$2$ nice property is strictly stronger than the nice property
and strictly weaker than the strongly nice property.

\appendix
\section{The 33 covering differences for \texorpdfstring{$F_t$}{Ft}}\label{app:cover-differences}

For $t\ge6$, write $A_i(t)=(t-1)!q_i(t)$ as in
\cref{tab:Ft23}.  A direct dominance check gives the following $33$
covers.  For each $T_i\gtrdot T_j$, the table records
$\Delta_{ij}(t)=q_j(t)-q_i(t)$.

{\scriptsize
\setlength{\tabcolsep}{4pt}
\renewcommand{\arraystretch}{.92}
\begin{longtable}{c c l}
\caption{Normalized coefficient differences on the covering relations.}\label{tab:coverdiff}\\
\toprule
Upper & Lower & $\Delta_{ij}(t)$\\
\midrule
\endfirsthead
\toprule
Upper & Lower & $\Delta_{ij}(t)$\\
\midrule
\endhead
$T_1$&$T_2$&$4(t-1)$\\
$T_2$&$T_3$&$6t^2-15t+10$\\
$T_2$&$T_4$&$6(t-1)$\\
$T_2$&$T_8$&$0$\\
$T_3$&$T_5$&$6t^2-13t+8$\\
$T_4$&$T_5$&$2(2t-3)(3t-4)$\\
$T_4$&$T_9$&$32(t-1)$\\
$T_5$&$T_6$&$6t^3-16t^2+23t-12$\\
$T_5$&$T_{10}$&$2(t-1)(16t-27)$\\
$T_6$&$T_7$&$t(2t^3-t^2+5t-3)$\\
$T_6$&$T_{11}$&$t(16t^2-22t+13)$\\
$T_7$&$T_{15}$&$t(t+1)(26t^2-19t+15)$\\
$T_8$&$T_9$&$38(t-1)$\\
$T_9$&$T_{10}$&$2(22t^2-76t+55)$\\
$T_9$&$T_{12}$&$124(t-1)$\\
$T_{10}$&$T_{11}$&$2(11t^3-35t^2+61t-33)$\\
$T_{10}$&$T_{13}$&$4(31t^2-99t+69)$\\
$T_{11}$&$T_{14}$&$2(31t^3-93t^2+102t-38)$\\
$T_{12}$&$T_{13}$&$6(28t^2-112t+85)$\\
$T_{12}$&$T_{17}$&$552(t-1)$\\
$T_{13}$&$T_{14}$&$2(42t^3-190t^2+361t-209)$\\
$T_{13}$&$T_{18}$&$6(92t^2-356t+263)$\\
$T_{14}$&$T_{15}$&$28t^4-72t^3+209t^2-205t+76$\\
$T_{14}$&$T_{19}$&$2(138t^3-578t^2+887t-430)$\\
$T_{15}$&$T_{16}$&$t(t+1)(7t^3+6t^2+53t-19)$\\
$T_{15}$&$T_{20}$&$t(92t^3-180t^2+231t-73)$\\
$T_{16}$&$T_{21}$&$t(t+1)(t+2)(23t^2-4t+27)$\\
$T_{17}$&$T_{18}$&$240(t-1)(3t-11)$\\
$T_{18}$&$T_{19}$&$24(15t^3-87t^2+193t-119)$\\
$T_{19}$&$T_{20}$&$12(10t^4-44t^3+133t^2-171t+78)$\\
$T_{20}$&$T_{21}$&$2t(15t^4-7t^3+150t^2-62t+54)$\\
$T_{21}$&$T_{22}$&$2t(t+1)(t+2)(3t^3+9t^2+49t+24)$\\
$T_{22}$&$T_{23}$&$t(t+1)(t+2)(t+3)(t+4)(t^2+5t+24)$\\
\bottomrule
\end{longtable}
}

Every displayed difference is nonnegative for $t\ge6$.  For the
polynomial factors whose positivity is not immediate, substituting
$t=u+6$ expands them as polynomials in $u\ge0$ with nonnegative
coefficients.  Hence all $33$ covering inequalities used in
\cref{thm:Fstrong} hold for $t\ge6$.

\end{document}